\theoremstyle{plain}
\newtheorem{thm}{Theorem}[section]
\newtheorem{cor}[thm]{Corollary}
\theoremstyle{definition}
\theoremstyle{remark}
\newtheorem{rem}[thm]{Remark}
 \font\cyr=wncyr10
 \newcommand{\nc}{\newcommand}
\nc{\per}[1]{\underset{#1}{\boldsymbol \pi}\,}
 \nc{\MT}{{\rm MT}}
 \nc{\XX}{{X}}
 \nc{\gF}{{\varPhi}}
 \nc{\ot}{\otimes}
 \nc{\wht}{\widehat}
 \nc{\bwg}{{\bigwedge}}
 \nc{\wg}{{\wedge}}
 \nc{\mmu}{{\boldsymbol{\mu}}}
 \nc{\mal}{{{\scriptstyle \maltese}}}
 \nc{\fA}{{\mathfrak A}}
 \nc{\HH}{{\mathfrak H}}
 \nc{\ra}{\rightarrow}
 \nc{\ors}{{\bfs}}
 \nc{\orr}{{\bfr}}
 \nc{\os}{{\overset}}
 \nc{\G}{{\mathbb G}}
 \nc{\F}{{\mathbb F}}
 \nc{\Z}{{\mathbb Z}}
 \nc{\R}{{\mathbb R}}
 \nc{\N}{{\mathbb N}}
 \nc{\ZN}{{\mathbb Z_{\ge 0}}}
 \nc{\Q}{{\mathbb Q}}
 \nc{\C}{{\mathbb C}}
 \nc{\CP}{{\mathbb{CP}}}
 \nc{\Cnn}{{\mathbb C}_{\ge 0}}
 \nc{\Cp}{{\mathbb C}_{>0}}
 \nc{\MPV}{{\mathcal{MPV}}}
 \nc{\tB}{{\tilde B}}
 \nc{\suf}{{\ast\,}}
 \nc{\sufq}{{\ast_q\,}}
 \nc{\gam}{{\gamma}}
 \nc{\gG}{{\Gamma}}
 \nc{\om}{{\omega}}
 \nc{\vep}{{\varepsilon}}
 \nc{\ga}{{\alpha}}
 \nc{\gl}{{\lambda}}
 \nc{\gb}{{\beta}}
 \nc{\gf}{{\varphi}}
 \nc{\gd}{{\delta}}
 \nc{\orgd}{{\vec \gd\,}}
 \nc{\gs}{{\sigma}}
 \nc{\gth}{{\theta}}
 \nc{\gS}{{\Sigma}}
 \nc{\gk}{{\kappa}}
  \nc{\gz}{{\zeta}}
 \nc{\tgz}{{\tilde{\zeta}}}
 \nc{\gO}{{\Omega}}
 \nc{\sif}{{\mathcal S}}
 \nc{\gt}{{\tau}}
 \nc{\Lra}{\Longrightarrow}
 \nc{\lra}{\longrightarrow}
 \nc{\lmaps}{\longmapsto}
 \nc{\fS}{{\mathfrak S}}
 \nc{\DD}{{\mathfrak D}}
 \nc{\Llra}{\Longleftrightarrow}
 \nc{\ol}{\overline}
 \nc{\ola}{\overleftarrow}
 \nc{\lms}{\longmapsto}
 \nc{\cv}{{{\mathsf c}{\mathsf v}}}
 \nc{\zq}{{\zeta_q}}
 \nc\qup{{q\uparrow 1}}
 \nc{\us}{\underset}
 \nc{\tn}{{\tilde{n}}}
 \nc{\gD}{{\Delta}}
 \nc{\bi}{{\bf i}}
 \nc{\bfone}{{\bf 1}}
 \nc{\bfa}{{\bf a}}
 \nc{\bfb}{{\bf b}}
 \nc{\bfc}{{\bf c}}
 \nc{\bfd}{{\bf d}}
 \nc{\bfe}{{\bf e}}
 \nc{\bff}{{\bf f}}
 \nc{\bfg}{{\bf g}}
 \nc{\bfi}{{\bf i}}
 \nc{\bfj}{{\bf j}}
 \nc{\bfn}{{\bf n}}
 \nc{\bfl}{{\bf l}}
 \nc{\bfk}{{\bf k}}
 \nc{\bfm}{{\bf m}}
 \nc{\bfo}{{\bf o}}
 \nc{\bfp}{{\bf p}}
 \nc{\bfq}{{\bf q}}
 \nc{\bfr}{{\bf r}}
 \nc{\bfs}{{\bf s}}
 \nc{\bft}{{\bf t}}
 \nc{\bfu}{{\bf u}}
 \nc{\bfv}{{\bf v}}
 \nc{\bfw}{{\bf w}}
 \nc{\bfx}{{\bf x}}
 \nc{\bfy}{{\bf y}}
 \nc{\bfz}{{\bf z}}
 \nc{\bfB}{{\bf B}}
 \nc{\bfP}{{\bf P}}
 \nc{\bfQ}{{\bf Q}}
 \nc{\bfY}{{\bf Y}}
 \nc{\bfgb}{{\boldsymbol \gb}}
 \nc{\bfga}{{\boldsymbol \ga}}
 \nc{\bfrho}{{\boldsymbol \rho}}
 \nc{\bfchi}{{\boldsymbol \chi}}
 \nc{\QX}{{\Q\langle \bfX\rangle}}
 \nc{\QY}{{\Q\langle \bfY\rangle}}
 \nc{\CX}{{\C\langle \bfX\rangle}}
 \nc{\CY}{{\C\langle \bfY\rangle}}
 \nc{\QXX}{{\Q\langle\!\langle \bfX\rangle\!\rangle}}
 \nc{\QYY}{{\Q\langle\!\langle \bfY\rangle\!\rangle}}
 \nc{\CXX}{{\C\langle\!\langle \bfX\rangle\!\rangle}}
 \nc{\CYY}{{\C\langle\!\langle \bfY\rangle\!\rangle}}
 \nc{\bbA}{{\mathbb A}}
 \nc{\bbB}{{\mathbb B}}
 \nc{\bbC}{{\mathbb C}}
 \nc{\bbD}{{\mathbb D}}
 \nc{\bbE}{{\mathbb E}}
 \nc{\bbF}{{\mathbb F}}
 \nc{\bbG}{{\mathbb G}}
 \nc{\bbH}{{\mathbb H}}
 \nc{\bbI}{{\mathbb I}}
 \nc{\bbJ}{{\mathbb J}}
 \nc{\bbK}{{\mathbb K}}
 \nc{\bbL}{{\mathbb L}}
 \nc{\bbM}{{\mathbb M}}
 \nc{\bbN}{{\mathbb N}}
 \nc{\bbO}{{\mathbb O}}
 \nc{\bbP}{{\mathbb P}}
 \nc{\bbQ}{{\mathbb Q}}
 \nc{\bbR}{{\mathbb R}}
 \nc{\bbS}{{\mathbb S}}
 \nc{\bbT}{{\mathbb T}}
 \nc{\bbU}{{\mathbb U}}
 \nc{\bbV}{{\mathbb V}}
 \nc{\bbW}{{\mathbb W}}
 \nc{\bbX}{{\mathbb X}}
 \nc{\bbY}{{\mathbb Y}}
 \nc{\bbZ}{{\mathbb Z}}
 \nc{\bba}{{\mathbb a}}
 \nc{\bbb}{{\mathbb b}}
 \nc{\bbc}{{\mathbb c}}
 \nc{\bbd}{{\mathbb d}}
 \nc{\bbe}{{\mathbb e}}
 \nc{\bbf}{{\mathbb f}}
 \nc{\bbg}{{\mathbb g}}
 \nc{\bbh}{{\mathbb h}}
 \nc{\bbi}{{\mathbb i}}
 \nc{\bbk}{{\mathbb k}}
 \nc{\bbl}{{\mathbb l}}
 \nc{\bbm}{{\mathbb m}}
 \nc{\bbn}{{\mathbb n}}
 \nc{\bbo}{{\mathbb o}}
 \nc{\bbp}{{\mathbb p}}
 \nc{\bbq}{{\mathbb q}}
 \nc{\bbr}{{\mathbb r}}
 \nc{\bbs}{{\mathbb s}}
 \nc{\bbt}{{\mathbb t}}
 \nc{\bbu}{{\mathbb u}}
 \nc{\bbv}{{\mathbb v}}
 \nc{\bbw}{{\mathbb w}}
 \nc{\bbx}{{\mathbb x}}
 \nc{\bby}{{\mathbb y}}
 \nc{\bbz}{{\mathbb z}}
 \nc{\MZV}{{\mathcal{MZV}}}
 \nc{\calA}{{\mathcal A}}
 \nc{\calB}{{\mathcal B}}
 \nc{\calC}{{\mathcal C}}
 \nc{\calD}{{\mathcal D}}
 \nc{\calE}{{\mathcal E}}
 \nc{\calF}{{\mathcal F}}
 \nc{\calG}{{\mathcal G}}
 \nc{\calH}{{\mathcal H}}
 \nc{\calI}{{\mathcal I}}
 \nc{\calJ}{{\mathcal J}}
 \nc{\calK}{{\mathcal K}}
 \nc{\calL}{{\mathcal L}}
 \nc{\calM}{{\mathcal M}}
 \nc{\calN}{{\mathcal N}}
 \nc{\calO}{{\mathcal O}}
 \nc{\calP}{{\mathcal P}}
 \nc{\calQ}{{\mathcal Q}}
 \nc{\calR}{{\mathcal R}}
 \nc{\calS}{{\mathcal S}}
 \nc{\calT}{{\mathcal T}}
 \nc{\calU}{{\mathcal U}}
 \nc{\calV}{{\mathcal V}}
 \nc{\calW}{{\mathcal W}}
 \nc{\calX}{{\mathcal X}}
 \nc{\calY}{{\mathcal Y}}
 \nc{\calZ}{{\mathcal Z}}
  \nc{\cala}{{\mathcal a}}
 \nc{\calb}{{\mathcal b}}
 \nc{\calc}{{\mathcal c}}
 \nc{\cald}{{\mathcal d}}
 \nc{\cale}{{\mathcal e}}
 \nc{\calf}{{\mathcal f}}
 \nc{\calg}{{\mathcal g}}
 \nc{\calh}{{\mathcal h}}
 \nc{\cali}{{\mathcal i}}
 \nc{\calj}{{\mathcal j}}
 \nc{\calk}{{\mathcal k}}
 \nc{\call}{{\mathcal l}}
 \nc{\calm}{{\mathcal m}}
 \nc{\caln}{{\mathcal n}}
 \nc{\calo}{{\mathcal o}}
 \nc{\calp}{{\mathsf p}}
 \nc{\calq}{{\mathcal q}}
 \nc{\calr}{{\mathcal r}}
 \nc{\cals}{{\mathcal s}}
 \nc{\calt}{{\mathcal t}}
 \nc{\calu}{{\mathcal u}}
 \nc{\calv}{{\mathcal v}}
 \nc{\calw}{{\mathcal w}}
 \nc{\calx}{{\mathcal x}}
 \nc{\caly}{{\mathcal y}}
 \nc{\calz}{{\mathcal z}}
 \nc{\frakA}{{\mathfrak A}}
 \nc{\frakB}{{\mathfrak B}}
 \nc{\frakC}{{\mathfrak C}}
 \nc{\frakD}{{\mathfrak D}}
 \nc{\frakE}{{\mathfrak E}}
 \nc{\frakF}{{\mathfrak F}}
 \nc{\frakG}{{\mathfrak G}}
 \nc{\frakH}{{\mathfrak H}}
 \nc{\frakI}{{\mathfrak I}}
 \nc{\frakJ}{{\mathfrak J}}
 \nc{\frakK}{{\mathfrak K}}
 \nc{\frakL}{{\mathfrak L}}
 \nc{\frakM}{{\mathfrak M}}
 \nc{\frakN}{{\mathfrak N}}
 \nc{\frakO}{{\mathfrak O}}
 \nc{\frakP}{{\mathfrak P}}
 \nc{\frakQ}{{\mathfrak Q}}
 \nc{\frakR}{{\mathfrak R}}
 \nc{\frakS}{{\mathfrak S}}
 \nc{\frakT}{{\mathfrak T}}
 \nc{\frakU}{{\mathfrak U}}
 \nc{\frakV}{{\mathfrak V}}
 \nc{\frakW}{{\mathfrak W}}
 \nc{\frakX}{{\mathfrak X}}
 \nc{\frakY}{{\mathfrak Y}}
 \nc{\frakZ}{{\mathfrak Z}}
 \nc{\fraka}{{\mathfrak a}}
 \nc{\frakb}{{\mathfrak b}}
 \nc{\frakc}{{\mathfrak c}}
 \nc{\frakd}{{\mathfrak d}}
 \nc{\frake}{{\mathfrak e}}
 \nc{\frakf}{{\mathfrak f}}
 \nc{\frakg}{{\mathfrak g}}
 \nc{\frakh}{{\mathfrak h}}
 \nc{\fraki}{{\mathfrak i}}
 \nc{\frakj}{{\mathfrak j}}
 \nc{\frakk}{{\mathfrak k}}
 \nc{\frakl}{{\mathfrak l}}
 \nc{\frakm}{{\mathfrak m}}
 \nc{\frakn}{{\mathfrak n}}
 \nc{\frako}{{\mathfrak o}}
 \nc{\frakp}{{\mathfrak p}}
 \nc{\frakq}{{\mathfrak q}}
 \nc{\frakr}{{\mathfrak r}}
 \nc{\fraks}{{\mathfrak s}}
 \nc{\frakt}{{\mathfrak t}}
 \nc{\fraku}{{\mathfrak u}}
 \nc{\frakv}{{\mathfrak v}}
 \nc{\frakw}{{\mathfrak w}}
 \nc{\frakx}{{\mathfrak x}}
 \nc{\fraky}{{\mathfrak y}}
 \nc{\frakz}{{\mathfrak z}}
 \nc{\so}{{\mathfrak so}}
 \nc{\sa}{{\mbox{{\scriptsize \cyr x}}}}
 \nc{\slfour}{{\mathfrak sl}_4}
 \nc{\one}{{\bf 1}}
 \nc{\zero}{{\bf 0}}
 \nc{\Qxy}{\Q\langle x,y\rangle}
\begin{document}

\title{New Families of Weighted Sum Formulas for \\ Multiple Zeta Values}

\author{Haiping Yuan \& Jianqiang Zhao}

\date{}
\maketitle

\noindent \textbf{Abstract.}
In this paper we use the generating functions and the double shuffle relations satisfied by the multiple zeta values to derive some new families of identities.

\section{Introduction}
In recent years there is a flux of research on the multiple zeta functions and their special values due to their deep
connections with many branches of mathematics and physics. For any positive integer $d$ (called the \emph{depth}) and $s_1,\dots,s_d$
with $s_1>1$ the multiple zeta values (MZVs) are defined by
\begin{equation*}
\zeta(s_1,\dots,s_d)=\sum_{ k_1>\dots>k_d>0} \frac{1}{k_1^{s_1}\cdots k_d^{s_d}}.
\end{equation*}
These values are easily seen to satisfy the so called stuffle relation. For example,
\begin{equation}\label{equ:2RiemStuffle2}
 \zeta(s_1)\zeta(s_2)=\zeta(s_1,s_2)+\zeta(s_2,s_1)+\zeta(s_1+s_2).
\end{equation}
Euler \cite{Euler1775} first studied the depth two case and obtained the following
decomposition formula  by using partial fraction techniques.
\begin{equation}\label{equ:2RiemZetaProd}
\zeta(s_1) \zeta(s_2)
 = \sum_{\substack{t_1\ge 2,t_2\ge 1\\ t_1+t_2=s_1+s_2}}
\bigg[\binom{t_1-1}{s_1-1}+\binom{t_2-1}{s_2-1}\bigg] \zeta(t_1,t_2), \quad s_1,s_2\ge 2.
\end{equation}
Although he did not consider divergence problem his approach has been made rigorous using modern techniques of regularization. Similarly to \eqref{equ:2RiemZetaProd} one can show that for all $s_1,s_3\ge 2$ and $s_2\ge 1$,
\begin{multline}
 \label{equ:2Riem1RiemProd}
\zeta(s_1,s_2)\,\zeta(s_3)  =
\sum_{\substack{t_1\ge 2,t_2\ge 1\\ t_1+t_2=s_1+s_3}} \binom{t_1-1}{s_3-1}\zeta(t_1,t_2,s_2)\\
+ \sum_{\substack{t_1\ge 2,t_2,t_3\ge 1\\ t_1+t_2+t_3 \\  =s_1+s_2+s_3 }}
\binom{t_1-1}{s_1-1} \bigg[\binom{t_2-1}{s_2-t_3}  +\binom{t_2-1}{s_2-1} \bigg]\zeta(t_1,t_2,t_3).
\end{multline}

In fact, nowadays this can be derived easily by the shuffle relations satisfied by the
iterated integral expression of MZVs (see \cite[p.\ 510]{Zagier1994}).
By combining the stuffle and the shuffle
relations one can obtain the so called double shuffle relations (see \cite{IKZ2006} for details).

Let $d$ be any positive integer and define the generating function
\begin{equation}\label{equ:genFs}
    G_d(x_1,\dots,x_d)=\sum_{s_1,\dots,s_d\in \N,s_1>1} x_1^{s_1-1}\cdots x_d^{s_d-1}\zeta(s_1,\dots,s_d).
\end{equation}
It is well-known that 
$$G_1(x) = -\gamma-\psi(1-x)$$
where $\gamma$ is Euler's constant and $\psi(x)$ is the digamma function, i.e., the
logarithmic derivative of the gamma function. 
In \cite{GKZ2006}, Gangl, Kaneko and Zagier used the double shuffle relations of \eqref{equ:2RiemStuffle2} and \eqref{equ:2RiemZetaProd} to derive the following equation:
\begin{equation}\label{equ:G2id}
G_2(x+y,x)+G_2(x+y,y)-G_2(x,y)-G_2(y,x)= \frac{G_1(x)-G_1(y)}{x-y},
\end{equation}
and proved some families of MZV identities. Machide \cite{Machide2012} generalized this
to depth three case using the extended (also called regularized) double shuffle relations.

It is well-known that in order to get complete linear relations between MZVs 
one should consider regularized double shuffle relations. For example,
the weighted sum formula of Ohno and Zudilin \cite{OhnoZu2008} states  that
\begin{equation}\label{equ:OWsum}
\sum_{\substack{j\ge 2,k\ge 1\\ j+k=n}} 2^j \zeta(j,k)=(n+1)\zeta(n).
\end{equation}
Later, Guo and Xie \cite{GuoXi2009} generalized \eqref{equ:OWsum} 
to arbitrary depths using regularized double shuffle relations 
(they in fact also used the sum formula which is another consequence 
of the regularized double shuffle relations \cite{IKZ2006}).

In this paper we shall use the generating functions of MZVs \eqref{equ:genFs} to
reformulate double shuffle relations and derive some new identities of MZVs. Notice
that we do not use the extended double shuffle relations, which makes the computation
a little easier. All the identities obtained this way are therefore finite extended
double shuffle relations in the sense of \cite{IKZ2006}.
For example, we get the following interesting result as a
corollary (see Corollary~\ref{cor:1st=3rdabc}) in depth three:
\begin{multline*}
\sum_{\substack{j\ge 2,k,l\ge 1\\ j+k+l=n}}
       2^{j-1} \zeta(j,k,l)
+\sum_{\substack{j\ge 2,k\ge 1\\ j+k=n-1}} 2^j  \zeta(j,k,1)\\
=  n\zeta(n-1,1) +3\zeta(n-2,2)+\zeta(2,n-2)+2\zeta(n)   .
\end{multline*}
for every positive integer$n\ge 2$. For a new result in depth four please see Corollary~\ref{cor:depth4}.

\medskip

\noindent
{\bf Acknowledgement.} Both authors would like to thank the Morningside Center of Mathematics, 
Chinese Academy of Science in Beijing, China for hospitality when the paper was prepared. 
They are indebted to the anonymous referee for his/her very careful reading of the first draft and 
many insightful remarks which improved the paper greatly. 
HY is partially supported by summer research grant from York College of Pennsylvania 
and JZ is partially supported by NSF DMS1162116.

\section{Depth 2: some new identities}
In this section we will derive some new identities of double zeta values using the generating function $G_2$. To begin with, we recall the famous sum formula
essentially known to Euler \cite{Euler1775}:
\begin{equation}\label{equ:EulerSumFormula}
\sum_{\substack{j\ge 2,k\ge 1\\ j+k=n}} \zeta(j,k)= \zeta(n).
\end{equation}
Using generating functions A.\ Granville \cite{Granville1997b} and D.\ Zagier
proved the following generalization to arbitrary depth first conjectured by Moen (see \cite{Hoffman1992}):
\begin{equation}\label{equ:sumConjMZV}
    \sum_{k_1\ge 2, k_2,\dots,k_d\ge 1, k_1+\cdots+k_d=w} \zeta(k_1,k_2,\dots,k_d)=\zeta(w).
\end{equation}

Our first result provides a weighted sum formula similar to but different from \eqref{equ:OWsum}.
\begin{thm}\label{thm:G2Der}
Let $n$ be a positive integer. Then
\begin{align}\label{equ:G2Der}
    \sum_{k=2}^{n-1} k\gz(k,n-k)=&\gz(2,n-2)+2\gz(n)-(n-2)\gz(n-1,1),\quad \forall n\ge 3,\\
    \sum_{k=2}^{n-1} k^2\gz(k,n-k)=&3\gz(2,n-2)+2\gz(3,n-3)+6\gz(n) \notag \\
        & -(2n-6)\gz(n-2,2)-n(n-2)\gz(n-1,1),\quad \forall n\ge 4.  \label{equ:G2DerDer}
\end{align}
\end{thm}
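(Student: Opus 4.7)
The plan is to read off coefficients from the Gangl--Kaneko--Zagier identity \eqref{equ:G2id}, viewed as an equality of formal power series in $\Q[[x,y]]$. Expanding $(x+y)^{s_1-1}$ by the binomial theorem and noting that $\frac{G_1(x)-G_1(y)}{x-y}=\sum_{s\ge 2}\zeta(s)\sum_{i+j=s-2}x^iy^j$, extraction of the coefficient of $x^{a}y^{b}$ on both sides of \eqref{equ:G2id} gives, for every pair $(a,b)\in\Z_{\ge 0}^2$ with $n:=a+b+2$,
\begin{equation}\label{eq:starprop}
\sum_{s_1+s_2=n}\!\left[\binom{s_1-1}{a}+\binom{s_1-1}{b}\right]\zeta(s_1,s_2)=\zeta(n)+\zeta(a{+}1,b{+}1)+\zeta(b{+}1,a{+}1),\tag{$\ast$}
\end{equation}
with the convention $\zeta(u,v):=0$ whenever $u<2$. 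This one-parameter family of relations is the main tool.

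To establish \eqref{equ:G2Der}, specialize $(a,b)=(n-3,1)$ in \eqref{eq:starprop}. The binomial $\binom{s_1-1}{1}$ equals $s_1-1$, while $\binom{s_1-1}{n-3}$ is supported on $s_1\in\{n-2,n-1\}$ with values $1$ and $n-2$ respectively. Thus \eqref{eq:starprop} becomes
\[
\sum_{s_1+s_2=n}(s_1-1)\zeta(s_1,s_2)+\zeta(n-2,2)+(n-2)\zeta(n-1,1)=\zeta(n)+\zeta(n-2,2)+\zeta(2,n-2).
\]
Cancelling $\zeta(n-2,2)$ on both sides, using the Euler sum formula \eqref{equ:EulerSumFormula} to rewrite $\sum(s_1-1)\zeta(s_1,s_2)$ as $\sum_{k=2}^{n-1}k\,\zeta(k,n-k)-\zeta(n)$, and rearranging yields exactly \eqref{equ:G2Der}.

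For \eqref{equ:G2DerDer}, take $(a,b)=(n-4,2)$ in \eqref{eq:starprop}. Writing $A_j:=\sum_{k=2}^{n-1}k^{j}\zeta(k,n-k)$, the binomial $\binom{s_1-1}{2}=\tfrac12(s_1-1)(s_1-2)$ contributes $\tfrac12(A_2-3A_1+2A_0)$, while $\binom{s_1-1}{n-4}$ is supported on $s_1\in\{n-3,n-2,n-1\}$ and yields the three boundary contributions $\zeta(n-3,3)$, $(n-3)\zeta(n-2,2)$, and $\tfrac12(n-2)(n-3)\zeta(n-1,1)$. The $\zeta(n-3,3)$ summand cancels its mate on the right of \eqref{eq:starprop}; after clearing the factor of $2$, substituting $A_0=\zeta(n)$ and inserting the just-proved expression for $A_1$, the $\zeta(n-1,1)$-coefficients combine as $3(n-2)+(n-2)(n-3)=n(n-2)$, and \eqref{equ:G2DerDer} follows.

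The approach is procedural, and the hardest step is purely clerical: one must correctly identify exactly which small values of $n-s_1$ render the binomial $\binom{s_1-1}{n-k}$ nonzero, and then keep track of the resulting boundary MZVs $\zeta(n-1,1)$, $\zeta(n-2,2)$, $\zeta(n-3,3)$ as they interact with the weighted power sums $A_j$; the degenerate cases (e.g.\ $n=3$ in the first identity) are subsumed automatically via the convention $\zeta(1,\cdot)=0$ together with Euler's classical $\zeta(2,1)=\zeta(3)$.
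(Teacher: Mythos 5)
Your proof is correct and is essentially the paper's argument in a different guise: extracting the coefficient of $x^ay^b$ from the weight-$n$ homogeneous part of \eqref{equ:G2id} is equivalent to the paper's device of differentiating with respect to $x$ (resp.\ multiplying by $x+y$ and differentiating again) and then evaluating at $(x,y)=(0,1)$, and both arguments close with the Euler sum formula \eqref{equ:EulerSumFormula}. Your choices $(a,b)=(n-3,1)$ and $(n-4,2)$ reproduce exactly the linear combinations the paper obtains, so the two proofs coincide in substance.
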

\begin{proof}
Making the substitutions $x\to xt$ and $y\to yt$ in \eqref{equ:G2id}
and comparing the coefficients of $t^{n-2}$ we get
\begin{equation*}
\sum_{k=2}^{n-1} \Big[(x+y)^{k-1} x^{j-1}
+(x+y)^{k-1} y^{j-1}-x^{k-1} y^{j-1}-y^{k-1} x^{j-1}\Big]\gz(k,j)=
    \left(\frac{x^{n-1}-y^{n-1}}{x-y}\right)\gz(n),
\end{equation*}
where $j=n-k$. Differentiating this equation with respect to $x$ we have
\begin{multline}\label{equ:afterDer}
\sum_{k=2}^{n-1} \Big[(k-1)(x+y)^{k-2} x^{j-1}+(j-1)(x+y)^{k-1} x^{j-2}
+(k-1) (x+y)^{k-2} y^{j-1}\\
-(k-1) x^{k-2} y^{j-1}-(j-1)y^{k-1} x^{j-2}\Big]\gz(k,j)
= \left(\frac{(n-1)x^{n-2}}{x-y}-\frac{x^{n-1}-y^{n-1}}{(x-y)^2}\right)\gz(n).
\end{multline}
Specializing to $(x,y)=(0,1)$ we find easily that
$$   (n-2)\gz(n-1,1)+ \sum_{k=2}^{n-1} (k-1)\gz(k,n-k)-\gz(2,n-2)=\gz(n).$$
So \eqref{equ:G2Der} follows from the sum formula \eqref{equ:EulerSumFormula}.

Now multiplying \eqref{equ:afterDer} by $x+y$, differentiating with respect to $x$,
and then specializing to $(x,y)=(0,1)$ we get
\begin{multline*}
    (n-2)^2 \gz(n-1,1)+(2n-6)\gz(n-2,2)+ \sum_{k=2}^{n-1} (k-1)^2\gz(k,n-k)\\
    -\gz(2,n-2)-2\gz(3,n-3)=3\gz(n).
\end{multline*}
Hence \eqref{equ:G2DerDer} quickly follows from \eqref{equ:G2Der} and
the sum formula \eqref{equ:EulerSumFormula}. This completes the proof of the theorem.
\end{proof}

\begin{rem}
It is conceivable that for every fixed positive integer $d$ a compact formula of
$\sum_{k=2}^{n-1} k^d\gz(k,n-k)$ can be obtained by differentiating \eqref{equ:afterDer}
repeatedly, similar to what we have done in Theorem~\ref{thm:G2Der}.
However, it seems to be a difficult problem to find a general formula for all $d$.
Guo, Lei and the second author recently have made some progress along this direction, see
\cite{GuoLeiZhao2013}.
\end{rem}

\section{Depth 3: product of three Riemann zeta values}
In the following we use three different methods to compute the generating function of
the product of three Riemann zeta values:
\begin{equation*}
    \sum_{s_1,s_2,s_3\ge 2} x_1^{s_1-1}x_2^{s_2-1}x_3^{s_3-1}\zeta(s_1) \zeta(s_2) \zeta(s_3).
\end{equation*}

\subsection{First method.}
Combining \eqref{equ:2RiemZetaProd} and \eqref{equ:2Riem1RiemProd} we have
{\allowdisplaybreaks
\begin{align}
\zeta(s_1)\zeta(s_2)\zeta(s_3)
 =& \sum_{\substack{t_1\ge 2,t_2\ge 1\\ t_1+t_2=s_1+s_2}}
\bigg[\binom{t_1-1}{s_1-1}+\binom{t_2-1}{s_2-1}\bigg] \cdot
\bigg\{\sum_{\substack{r_1\ge 2,r_2\ge 1\\ r_1+r_2=t_1+s_3}}
\hspace{-.4cm}\binom{r_1-1}{s_3-1} \zeta(r_1,r_2,t_2)  \notag \\
& +\sum_{\substack{r_1\ge 2,r_2,r_3\ge 1\\ r_1+r_2+r_3 \\ =t_1+t_2+s_3 }}
\hspace{-.2cm}
\binom{r_1-1}{t_1-1} \bigg[\binom{r_2-1}{t_2-r_3}  +
\binom{r_2-1}{t_2-1} \bigg]
\zeta(r_1,r_2,r_3)\bigg\}   \notag\\
=&\sum_{\substack{t_1\ge 2,t_2\ge 1\\ t_1+t_2=s_1+s_2}}\sum_{\substack{r_1\ge 2,r_2\ge 1\\ r_1+r_2=t_1+s_3}}
\binom{t_1-1}{s_1-1}\binom{r_1-1}{s_3-1}\zeta(r_1,r_2,t_2)  \label{equ:3Riem3Prod1}\\
+&\sum_{\substack{t_1\ge 2,t_2\ge 1\\ t_1+t_2=s_1+s_2}}\sum_{\substack{r_1\ge 2,r_2\ge 1\\ r_1+r_2=t_1+s_3}}
\binom{t_2-1}{s_2-1}\binom{r_1-1}{s_3-1}\zeta(r_1,r_2,t_2)   \label{equ:3Riem3Prod2}\\
+&\sum_{\substack{t_1\ge 2,t_2\ge 1\\ t_1+t_2=s_1+s_2}}
\sum_{\substack{r_1\ge 2,r_2,r_3\ge 1\\ r_1+r_2+r_3 \\ =t_1+t_2+s_3 }}
\binom{t_1-1}{s_1-1}\binom{r_1-1}{t_1-1}
\binom{r_2-1}{t_2-r_3}
\zeta(r_1,r_2,r_3) \label{equ:3Riem3Prod3}\\
+&\sum_{\substack{t_1\ge 2,t_2\ge 1\\ t_1+t_2=s_1+s_2}}
\sum_{\substack{r_1\ge 2,r_2,r_3\ge 1\\ r_1+r_2+r_3 \\ =t_1+t_2+s_3 }}
\binom{t_2-1}{s_2-1}\binom{r_1-1}{t_1-1}
\binom{r_2-1}{t_2-r_3}
\zeta(r_1,r_2,r_3) \label{equ:3Riem3Prod4}\\
+&\sum_{\substack{t_1\ge 2,t_2\ge 1\\ t_1+t_2=s_1+s_2}}
\sum_{\substack{r_1\ge 2,r_2,r_3\ge 1\\ r_1+r_2+r_3 \\ =t_1+t_2+s_3 }}
\binom{t_1-1}{s_1-1}\binom{r_1-1}{t_1-1}
\binom{r_2-1}{t_2-1}
\zeta(r_1,r_2,r_3) \label{equ:3Riem3Prod5}\\
+&\sum_{\substack{t_1\ge 2,t_2\ge 1\\ t_1+t_2=s_1+s_2}}
\sum_{\substack{r_1\ge 2,r_2,r_3\ge 1\\ r_1+r_2+r_3 \\ =t_1+t_2+s_3 }}
\binom{t_2-1}{s_2-1}\binom{r_1-1}{t_1-1}
\binom{r_2-1}{t_2-1}
\zeta(r_1,r_2,r_3) \label{equ:3Riem3Prod6}
\end{align}
}
We first treat \eqref{equ:3Riem3Prod1} to \eqref{equ:3Riem3Prod6} using the binomial identities repeatedly to derive formulas involving the generating functions $G_3$. To save space we only compute \eqref{equ:3Riem3Prod1} in details and leave the others to the interested reader. Also we will use the shorthand $x_{ij}=x_j+x_j$ and $x_{ijk}=x_j+x_j+x_k$ in what follows. We also
use the shorthand $\sum$\eqref{equ:3Riem3Prod1} to stand for $\sum_{\substack{s_1,s_2,s_3\ge 1}} x_1^{s_1-1} x_2^{s_2-1} x_3^{s_3-1}$ \eqref{equ:3Riem3Prod1}. Now
{\allowdisplaybreaks
\begin{align*}
\ &\sum \eqref{equ:3Riem3Prod1}
=\sum_{\substack{s_1,s_2,s_3\ge 2}}
   \sum_{\substack{t_1\ge 2,t_2\ge 1\\ t_1+t_2=s_1+s_2}}
    \sum_{\substack{r_1\ge 2,r_2\ge 1\\ r_1+r_2=t_1+s_3}}
\binom{t_1-1}{s_1-1}\binom{r_1-1}{s_3-1}
x_1^{s_1-1}x_2^{s_2-1}x_3^{s_3-1}\zeta(r_1,r_2,t_2) \\
=&\sum_{\substack{s_1,s_2,s_3\ge 2}}
   \sum_{\substack{t_1\ge 2,t_2\ge 1\\ t_1+t_2=s_1+s_2}}
    \sum_{\substack{r_1\ge 2,r_2\ge 1\\ r_1+r_2=t_1+s_3}}
\binom{t_1-1}{s_1-1}\binom{r_1-1}{s_3-1}
x_1^{s_1-1}x_2^{t_1-s_1}x_2^{t_2-1}x_3^{s_3-1}\zeta(r_1,r_2,t_2) \\
=&    \sum_{\substack{t_1,s_3,r_1\ge 2,t_2,r_2\ge 1\\  r_1+r_2=t_1+s_3}}
\Big[(x_{12})^{t_1-1}-x_2^{t_1-1}\Big]x_3^{s_3-1}\binom{r_1-1}{s_3-1}
x_2^{t_2-1}\zeta(r_1,r_2,t_2) \\
&   \hskip 2cm -\sum_{\substack{t_1,s_3,r_1\ge 2,r_2\ge 1\\  r_1+r_2=t_1+s_3}}
x_1^{t_1-1}x_3^{s_3-1}\binom{r_1-1}{s_3-1}
\zeta(r_1,r_2,1) \\
=&   \sum_{\substack{t_1,s_3,r_1\ge 2,t_2,r_2\ge 1\\  r_1+r_2=t_1+s_3}}
\Big[(x_{12})^{r_2-1}(x_{12})^{r_1-s_3}-x_2^{r_2-1}x_2^{r_1-s_3}\Big]x_3^{s_3-1}\binom{r_1-1}{s_3-1}
x_2^{t_2-1}\zeta(r_1,r_2,t_2) \\
&   \hskip 2cm -\sum_{\substack{t_1,s_3,r_1\ge 2,r_2\ge 1\\  r_1+r_2=t_1+s_3}}
x_1^{r_2-1}x_1^{r_1-s_3}x_3^{s_3-1}\binom{r_1-1}{s_3-1}
\zeta(r_1,r_2,1) \\
=& \sum_{r_1\ge 1,t_2,r_2\ge 1}
\Big\{(x_{12})^{r_2-1}\big[(x_{123})^{r_1}-(x_{12})^{r_1}\big]
-x_2^{r_2-1}\big[(x_{23})^{r_1}-x_2^{r_1}\big]\Big\} x_2^{t_2-1}\zeta(r_1+1,r_2,t_2)  \\
& -\sum_{\substack{t_1,s_3,r_1\ge 2,r_2\ge 1\\  r_1+r_2=t_1+s_3}}
 x_1^{r_2-1}\big[(x_{13})^{r_1-1}-x_1^{r_1-1}\big]\zeta(r_1,r_2,1)
    +\sum_{r_1\ge 2} x_3^{r_1-1}\zeta(r_1,1,1)  \\
=&   G_3(x_{123},x_{12}, x_2)- G_3(x_{12},x_{12}, x_2)- G_3(x_{23}, x_2, x_2)+ G_3(x_2 , x_2, x_2)\\
&   \hskip 2cm -\sum_{\substack{t_1,s_3,r_1\ge 2,r_2\ge 1\\  r_1+r_2=t_1+s_3}}
x_1^{r_2-1}\big[(x_{13})^{r_1-1}-x_1^{r_1-1}\big] \zeta(r_1,r_2,1)
+\sum_{r_1\ge 2} x_3^{r_1-1}\zeta(r_1,1,1)
\end{align*}
}
Similarly we find
\begin{multline*}
\sum \eqref{equ:3Riem3Prod3} 
=G_3(x_{123},x_{23}, x_2)- G_3(x_{23},x_{23}, x_2)- G_3(x_{12}, x_2, x_2)+ G_3(x_2 , x_2, x_2)\\
 -\sum_{r_1\ge 2,r_2\ge 1} \big[(x_{13})^{r_1-1}-x_3^{r_1-1}\big]x_3^{r_2-1}\zeta(r_1,r_2,1)
+\sum_{r_1\ge 2}x_1^{r_1-1}\zeta(r_1,1,1)
\end{multline*}
which is just $\sum$\eqref{equ:3Riem3Prod1} under the operation $x_1\leftrightarrow x_3$. Further,
\begin{multline*}
\sum\eqref{equ:3Riem3Prod5} 
=G_3(x_{123},x_{23}, x_3)- G_3(x_{23},x_{23}, x_2)- G_3(x_{13}, x_3, x_3)+ G_3(x_3 , x_3, x_3)\\
 -\sum_{r_1\ge 2,r_2\ge 1} \big[(x_{12})^{r_1-1}-x_2^{r_1-1}\big]x_2^{r_2-1}\zeta(r_1,r_2,1) +\sum_{r_1\ge 2} x_1^{r_1-1} \zeta(r_1,1,1)
\end{multline*}
which is $\sum$\eqref{equ:3Riem3Prod1} under the operation ${\rm Cyc}( x_1\to x_2 \to x_3 \to x_1).$
By the same argument we easily find that $\sum$\eqref{equ:3Riem3Prod2}, $\sum$\eqref{equ:3Riem3Prod4}
and $\sum$\eqref{equ:3Riem3Prod6} can all be obtained from $\sum$\eqref{equ:3Riem3Prod1} under different
permutations of $x_1, x_2$ and $x_3.$  Therefore
\begin{multline*}
    \sum_{s_1,s_2,s_3\ge 2} x_1^{s_1-1}x_2^{s_2-1}x_3^{s_3-1}\zeta(s_1) \zeta(s_2) \zeta(s_3)\\
=\bigoplus_{\calS(x_1,x_2, x_3)}\Big\{
    G_3(x_{123},x_{12}, x_2)- G_3(x_{12},x_{12}, x_2)- G_3(x_{23}, x_2, x_2)+ G_3(x_2 , x_2, x_2)\\
-\sum_{\substack{t_1,s_3,r_1\ge 2,r_2\ge 1\\  r_1+r_2=t_1+s_3}}
x_1^{r_2-1}\big[(x_{13})^{r_1-1}-x_1^{r_1-1}\big]
\zeta(r_1,r_2,1)
+\sum_{r_1\ge 2} x_3^{r_1-1}\zeta(r_1,1,1) \Big\}
\end{multline*}
where, for a function $f(x_1,\dots,x_k)$ we define
\begin{equation*}
 \bigoplus_{\calS(x_1,\dots,x_k)} f(x_1,\dots,x_k)=\sum_{\gs: \text{ permutations of }1,\dots,k} f(x_{\gs(1)},\dots,x_{\gs(k)}).
\end{equation*}

\subsection{Second method.}
Multiplying \eqref{equ:2RiemZetaProd} by $\zeta(s_3)$ and using the stuffle relations we get
{\allowdisplaybreaks
\begin{align}
\ &\sum_{s_1,s_2,s_3\ge 2} x_1^{s_1-1}x_2^{s_2-1}x_3^{s_3-1}\zeta(s_1) \zeta(s_2) \zeta(s_3)\notag\\
 =& \sum_{s_1,s_2,s_3\ge 2} x_1^{s_1-1}x_2^{s_2-1}x_3^{s_3-1}
 \sum_{\substack{t_1\ge 2,t_2\ge 1\\ t_1+t_2=s_1+s_2}}
\bigg[\binom{t_1-1}{s_1-1}+\binom{t_1-1}{s_2-1}\bigg]\zeta(t_1,t_2,s_3)\label{equ:3product1}\\
+&\sum_{s_1,s_2,s_3\ge 2} x_1^{s_1-1}x_2^{s_2-1}x_3^{s_3-1}
\sum_{\substack{t_1\ge 2,t_2\ge 1\\ t_1+t_2=s_1+s_2}}
\bigg[\binom{t_1-1}{s_1-1}+\binom{t_1-1}{s_2-1}\bigg]\zeta(t_1,s_3,t_2)\label{equ:3product2}\\
+&\sum_{s_1,s_2,s_3\ge 2} x_1^{s_1-1}x_2^{s_2-1}x_3^{s_3-1}
\sum_{\substack{t_1\ge 2,t_2\ge 1\\ t_1+t_2=s_1+s_2}}
\bigg[\binom{t_1-1}{s_1-1}+\binom{t_1-1}{s_2-1}\bigg]\zeta(s_3,t_1,t_2)\label{equ:3product3}\\
+&\sum_{s_1,s_2,s_3\ge 2} x_1^{s_1-1}x_2^{s_2-1}x_3^{s_3-1}
\sum_{\substack{t_1\ge 2,t_2\ge 1\\ t_1+t_2=s_1+s_2}}
\bigg[\binom{t_1-1}{s_1-1}+\binom{t_1-1}{s_2-1}\bigg]\zeta(t_1,t_2+s_3)\label{equ:3product4}\\
+&\sum_{s_1,s_2,s_3\ge 2} x_1^{s_1-1}x_2^{s_2-1}x_3^{s_3-1}
\sum_{\substack{t_1\ge 2,t_2\ge 1\\ t_1+t_2=s_1+s_2}}
\bigg[\binom{t_1-1}{s_1-1}+\binom{t_1-1}{s_2-1}\bigg]\zeta(t_1+s_3,t_2). \label{equ:3product5}
\end{align}
}
Using the techniques similar to the one used in the proceeding subsection we get
{\allowdisplaybreaks
\begin{align*}
\eqref{equ:3product1}
=&\bigoplus_{\calS(x_1,x_2)} \Big\{G_3(x_{12},x_2,x_3)
 -\sum_{t_1\ge 2,s_3\ge 1} x_1^{t_1-1}  x_3^{s_3-1}\zeta(t_1,1,s_3)+\sum_{t_1\ge 2} x_1^{t_1-1} \zeta(t_1,1,1) \\
& -\sum_{t_1\ge 2,t_2\ge 1} (x_{12})^{t_1-1} x_2^{t_2-1} \zeta(t_1,t_2,1)
   -G_3(x_2,x_2,x_3) +\sum_{t_1\ge 2,t_2\ge 1}   x_2^{t_1+t_2-2} \zeta(t_1,t_2,1) \Big\}\\
\eqref{equ:3product2}
=&\bigoplus_{\calS(x_1,x_2)} \Big\{G_3(x_{12},x_3,x_2)
 -\sum_{t_1\ge 2,s_3\ge 1} x_1^{t_1-1}  x_3^{s_3-1}\zeta(t_1,s_3,1)+\sum_{t_1\ge 2} x_1^{t_1-1} \zeta(t_1,1,1) \\
& -\sum_{t_1\ge 2,t_2\ge 1} (x_{12})^{t_1-1} x_2^{t_2-1} \zeta(t_1,1,t_2)
   -G_3(x_2,x_3,x_2) +\sum_{t_1\ge 2,t_2\ge 1}   x_2^{t_1+t_2-2} \zeta(t_1,1,t_2) \Big\}\\
\eqref{equ:3product3}
=&\bigoplus_{\calS(x_1,x_2)} \Big\{ G_3(x_3,x_{12},x_2)-G_3(x_3,x_2,x_2)\\
& -\sum_{s_3\ge 2,t_1\ge 1} x_1^{t_1-1} x_3^{s_3-1} \zeta(s_3,t_1,1)
    +\sum_{s_3\ge 2}  x_3^{s_3-1}  \zeta(s_3,1,1)  \Big\}\\
\eqref{equ:3product4}
=&\bigoplus_{\calS(x_1,x_2)} \Big\{
 \frac{G_2(x_{12},x_3)-G_2(x_{12},x_2)}{x_3-x_2}-\frac{1}{x_2}G_2(x_{12},x_2)\\
& -\frac{G_2(x_2,x_3)-G_2(x_2,x_2)}{x_3-x_2}
    +\frac{1}{x_2}G_2(x_2,x_2)-\frac{1}{x_3}G_2(x_1,x_3) \\
& +\frac{1}{x_2}\sum_{t_1\ge 2}\Big((x_{12})^{t_1-1}-x_2^{t_1-1}\Big)\zeta(t_1,1)
   +\frac{1}{x_3}\sum_{t_1\ge 1} x_1^{t_1-1}\Big(\zeta(t_1,1)+x_3\zeta(t_1,2)\Big) \Big\}\\
\eqref{equ:3product5}
=&\bigoplus_{\calS(x_1,x_2)} \Big\{ \frac{G_2(x_3,x_2)-G_2(x_{12},x_2)}{x_3-x_{12}}
 -\frac{G_2(x_3,x_2)-G_2(x_2,x_2)}{x_3-x_2}+\frac{1}{x_2}G_2(x_2,x_2)\\
&   -\frac{1}{x_{12}}G_2(x_{12},x_2)-\sum_{t\ge 2}\left(\frac{x_3^{t-1}-x_1^{t-1}}{x_3-x_1}-x_3^{t-2}-x_1^{t-2} \right) \zeta(t,1)
-\zeta(2,1)\Big\}
\end{align*}
}
Therefore
{\allowdisplaybreaks
\begin{align*}
&\sum_{s_1,s_2,s_3\ge 2}x_1^{s_1-1}x_2^{s_2-1}x_3^{s_3-1}\zeta(s_1)\zeta(s_2)\zeta(s_3)\\
=& \bigoplus_{\calS(x_1,x_2)} \Big\{G_3(x_{12},x_2,x_3)
 -\sum_{t_1\ge 2,t_2\ge 1} x_1^{t_1-1}  x_3^{t_2-1}\zeta(t_1,1,t_2)+2\sum_{t_1\ge 2} x_1^{t_1-1} \zeta(t_1,1,1) \\
 -&\sum_{t_1\ge 2,t_2\ge 1} (x_{12})^{t_1-1} x_2^{t_2-1} \zeta(t_1,t_2,1)
   -\sum_{t_1\ge 2,t_2\ge 1} x_3^{t_1-1} x_1^{t_2-1} \zeta(t_1,t_2,1)
   +\sum_{t_1\ge 2,t_2\ge 1}   x_2^{t_1+t_2-2} \zeta(t_1,t_2,1) \\
 +&\sum_{t_1\ge 2,t_2\ge 1}   x_2^{t_1+t_2-2} \zeta(t_1,1,t_2)
 -\sum_{t_1\ge 2,t_2\ge 1} x_1^{t_1-1}  x_3^{t_2-1}\zeta(t_1,t_2,1)
    -\sum_{t_1\ge 2,t_2\ge 1} (x_{12})^{t_1-1} x_2^{t_2-1} \zeta(t_1,1,t_2)
\\
+& G_3(x_{12},x_3,x_2)-G_3(x_2,x_3,x_2)
+ G_3(x_3,x_{12},x_2)-G_3(x_3,x_2,x_2)-G_3(x_2,x_2,x_3)  \\
+&\frac{G_2(x_{12},x_3)-G_2(x_{12},x_2)}{x_3-x_2}
    -\frac{1}{x_2}G_2(x_{12},x_2)
   -\frac{G_2(x_2,x_3)-G_2(x_2,x_2)}{x_3-x_2}\\
+&\frac{2}{x_2}G_2(x_2,x_2)-\frac{1}{x_3}G_2(x_1,x_3)
    +\frac{G_2(x_3,x_2)-G_2(x_{12},x_2)}{x_3-x_2-x_1}-\frac{1}{x_{12}}G_2(x_{12},x_2)\\
+&\frac{1}{x_2}\sum_{t_1\ge 2}(x_{12})^{t_1-1}\zeta(t_1,1)
   +\frac{1}{x_3}\sum_{t_1\ge 2} x_1^{t_1-1}\Big(\zeta(t_1,1)+x_3\zeta(t_1,2)\Big)    +\sum_{t_1\ge 2}  x_3^{t_1-1}  \zeta(t_1,1,1)  \\
-&\frac{G_2(x_3,x_2)-G_2(x_2,x_2)}{x_3-x_2}
-\sum_{t\ge 2}\left(\frac{x_3^{t-1}-x_1^{t-1}}{x_3-x_1}-x_3^{t-2} \right) \zeta(t,1)
-\zeta(2,1)\Big\}
\end{align*}
}

\subsection{Third method.}
Repeated use of stuffle relations yields
\begin{multline}\label{equ:4types}
 \zeta(s_1)\zeta(s_2)\zeta(s_3)=\zeta(s_1,s_2,s_3)
 +\zeta(s_1,s_3,s_2)+\zeta(s_3,s_1,s_2)
 +\zeta(s_1,s_2+s_3)\\
 +\zeta(s_1+s_3,s_2)
 +\zeta(s_2,s_1,s_3)
 +\zeta(s_2,s_3,s_1)+\zeta(s_3,s_2,s_1)\\
 +\zeta(s_2,s_1+s_3)
 +\zeta(s_2+s_3,s_1)
 +\zeta(s_1+s_2,s_3) +\zeta(s_3,s_1+s_2) +\zeta(s_1+s_2+s_3).
\end{multline}
On the right hand side of the above there are essentially four types of MZVs.
Similar computation as above leads to the following four expressions
of their generating functions:
{\allowdisplaybreaks
\begin{align*}
&\sum_{s_1,s_2,s_3\ge 2} x_1^{s_1-1}x_2^{s_2-1}x_3^{s_3-1}\zeta(s_1,s_2,s_3)=G_3(x_1,x_2,x_3)
-\sum_{s_1\ge 2,s_2\ge 1} x_1^{s_1-1}x_2^{s_2-1}\zeta(s_1,s_2,1)\\
& -\sum_{s_1\ge 2,s_3\ge 1} x_1^{s_1-1}x_3^{s_3-1}\zeta(s_1,1,s_3)
+\sum_{s_1\ge 2} x_1^{s_1-1}\zeta(s_1,1,1)\\
& \sum_{s_1,s_2,s_3\ge 2} x_1^{s_1-1}x_2^{s_2-1}x_3^{s_3-1}
\zeta(s_1,s_2+s_3)=\frac{G_2(x_1,x_3)-G_2(x_1,x_2)}{x_3-x_2}-\frac{G_2(x_1,x_2)}{x_2}\\
& -\frac{G_2(x_1,x_3)}{x_3}+\sum_{s_1\ge 2} x_1^{s_1-1}\zeta(s_1,2)
+\left(\frac{1}{x_3}+\frac{1}{x_2}\right)\sum_{s_1\ge 2} x_1^{s_1-1}\zeta(s_1,1)\\
&\sum_{s_1,s_2,s_3\ge 2} x_1^{s_1-1}x_2^{s_2-1}x_3^{s_3-1}
\zeta(s_2+s_3,s_1)
=\frac{G_2(x_3,x_1)-G_2(x_2,x_1)}{x_3-x_2}-\frac{1}{x_2}G_2(x_2,x_1)\\
&-\frac{1}{x_3}G_2(x_3,x_1)+\sum_{s_1\ge 1} x_1^{s_1-1}\zeta(2,s_1)
-\sum_{s\ge 2}\left(\frac{x_3^{s-1}-x_2^{s-1}}{x_3-x_2}
    -x_3^{s-2}-x_2^{s-2}+\gd_{s,2}\right)\zeta(s,1)\\
&\sum_{s_1,s_2,s_3\ge 2} x_1^{s_1-1}x_2^{s_2-1}x_3^{s_3-1}\zeta(s_1+s_2+s_3)
=\frac{1}{x_3}\left(\frac{G_1(x_3)}{x_3}+\frac{G_1(x_1)}{x_1}
    -\frac{G_1(x_3)-G_1(x_1)}{x_3-x_1} \right)\\
&+\frac{1}{x_3-x_2}\left(\frac{G_1(x_3)-G_1(x_1)}{x_3-x_1}
    -\frac{G_1(x_3)}{x_3}-\frac{G_1(x_2)-G_1(x_1)}{x_2-x_1}
    +\frac{G_1(x_2)}{x_2}\right)--\frac{\zeta(2)}{x_3}\\
&-\frac{1}{x_2}\left(\frac{G_1(x_2)-G_1(x_1)}{x_2-x_1}
    -\frac{G_1(x_2)}{x_2}-\frac{G_1(x_1)}{x_1}+\zeta(2)\right)
        +\frac{G_1(x_1)}{x_1^2}-\zeta(3)-\frac{\zeta(2)}{x_1}
\end{align*}}
Therefore \eqref{equ:4types} becomes
\begin{multline*}
    \sum_{s_1,s_2,s_3\ge 2} x_1^{s_1-1}x_2^{s_2-1}x_3^{s_3-1}\zeta(s_1) \zeta(s_2) \zeta(s_3)
=\bigoplus_{\calS(x_1,x_2, x_3)}\Big\{G_3(x_1,x_2,x_3)-\frac{G_2(x_1,x_2)}{x_2}\\
-\frac{G_2(x_2,x_1)}{x_2}+\frac{G_2(x_1,x_3)+G_2(x_3,x_1)}{x_3-x_2}
-\sum_{s_1\ge 2,s_2\ge 1} x_1^{s_1-1}x_2^{s_2-1}\zeta(s_1,s_2,1)
+\sum_{s\ge 2}x_1^{s-2}\zeta(s,1)\\
+\sum_{s\ge 2} x_1^{s-1}\zeta(s,1,1)
-\sum_{s_1\ge 2,s_3\ge 1} x_1^{s_1-1}x_3^{s_3-1}\zeta(s_1,1,s_3)
+\frac{1}{x_2}\sum_{s\ge 2} x_1^{s-1}\zeta(s,1)\Big\}-\zeta(3)\\
+\bigoplus_{\calC(x_1,x_2, x_3)}\Big\{
\sum_{s\ge 2} x_1^{s-1}\zeta(s,2)+\sum_{s\ge 2} x_1^{s-1}\zeta(2,s)
-\sum_{s\ge 2}\left(\frac{x_3^{s-1}-x_2^{s-1}}{x_3-x_2}\right)\zeta(s,1)
+\frac{G_1(x_1)}{x_1^2}-\frac{\zeta(2)}{x_1}\Big\}\\
+\bigoplus_{\calC(x_2, x_3)}\Big\{
\frac{1}{x_3-x_2}\left(\frac{G_1(x_3)-G_1(x_1)}{x_3-x_1}
    +\frac{G_1(x_2)}{x_2}\right)
-\frac{1}{x_2}\left(\frac{G_1(x_2)-G_1(x_1)}{x_2-x_1}-\frac{G_1(x_1)}{x_1}\right)\Big\}
\end{multline*}
Here, for a function $f(x_1,\dots,x_k)$ we define
\begin{equation*}
 \bigoplus_{\calC(x_1,\dots,x_k)} f(x_1,\dots,x_k)=\sum_{i=1}^k f(x_{i},x_{i+1},\dots,x_{i+k-1})
\end{equation*}
where the subscript is taken modulo $k$.

By comparing the first and the third method we get
\begin{thm}\label{thm:1st=3rd}
We have
{\allowdisplaybreaks
\begin{multline*}
\bigoplus_{\calS(x_1,x_2, x_3)}\Big\{
    G_3(x_{123},x_{12}, x_2)- G_3(x_{12},x_{12}, x_2)- G_3(x_{23}, x_2, x_2)\\
+ G_3(x_2 , x_2, x_2)-G_3(x_1,x_2,x_3)-\frac{G_2(x_1,x_3)+G_2(x_3,x_1)}{x_3-x_2}+\frac{G_2(x_1,x_2)}{x_2}
+\frac{G_2(x_2,x_1)}{x_2}\Big\}\\
-\bigoplus_{\calC(x_2, x_3)}\Big\{
\frac{1}{x_3-x_2}\left(\frac{G_1(x_3)-G_1(x_1)}{x_3-x_1}
    +\frac{G_1(x_2)}{x_2}\right)
-\frac{G_1(x_2)-G_1(x_1)}{x_2(x_2-x_1)}+\frac{G_1(x_1)}{x_1x_2}
\Big\} \\
=\bigoplus_{\calS(x_1,x_2, x_3)}\Big\{\sum_{r_1\ge 2,r_2\ge 1}
x_1^{r_2-1}\big[(x_{13})^{r_1-1}-x_1^{r_1-1}-x_3^{r_1-1}\big]
\zeta(r_1,r_2,1)
+\sum_{s\ge 2}x_1^{s-2}\zeta(s,1)\\
-\sum_{s_1\ge 2,s_3\ge 1} x_1^{s_1-1}x_3^{s_3-1}\zeta(s_1,1,s_3)
+\frac{1}{x_2}\sum_{s\ge 2} x_1^{s-1}\zeta(s,1)
-\sum_{s\ge 2}\left(\frac{x_3^{s-1}}{x_3-x_2}\right)\zeta(s,1)
\Big\}\\
+\bigoplus_{\calC(x_1,x_2, x_3)}\Big\{
\sum_{s\ge 2} x_1^{s-1}\zeta(s,2)+\sum_{s\ge 2} x_1^{s-1}\zeta(2,s)
+\frac{G_1(x_1)}{x_1^2}-\frac{\zeta(2)}{x_1}\Big\}-\zeta(3)\\
\end{multline*}
}
\end{thm}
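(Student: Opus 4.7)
The plan is to subtract the two expressions for the common generating function
\[
\sum_{s_1,s_2,s_3\ge 2} x_1^{s_1-1}x_2^{s_2-1}x_3^{s_3-1}\zeta(s_1)\zeta(s_2)\zeta(s_3)
\]
obtained in Subsections 3.1 and 3.3, and then regroup the resulting vanishing expression so that all ``non-elementary'' generating function pieces (values of $G_3$, $G_2$, $G_1$ and their quotients by linear forms in the $x_i$) are gathered on one side while the finite sums of double and triple zeta values collect on the other. Since both formulas are valid as formal power series identities in $x_1,x_2,x_3$, the difference is identically zero and the theorem is merely its rearrangement.

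First I would split the bookkeeping by the type of generating function involved. The $G_3$-terms with ``mixed'' arguments such as $G_3(x_{123},x_{12},x_2)$, $G_3(x_{12},x_{12},x_2)$, $G_3(x_{23},x_2,x_2)$, and $G_3(x_2,x_2,x_2)$ come only from the first method, while $G_3(x_1,x_2,x_3)$ is contributed only by the third method; all of these pass to the left-hand side under the full symmetrization $\bigoplus_{\calS(x_1,x_2,x_3)}$. The $G_2$ contributions of the form $[G_2(x_i,x_j)+G_2(x_j,x_i)]/(x_k-x_l)$ and $G_2(x_i,x_j)/x_k$ occur only in method 3 and likewise move to the left. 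The $G_1$-terms, which arise in method 3 from the generating function for $\zeta(s_1+s_2+s_3)$, are only cyclically symmetric in $x_2$ and $x_3$ (after the $\calS$-symmetrization of the other variables), so I would collect them under $\bigoplus_{\calC(x_2,x_3)}$ exactly as in the statement.

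Next I would collect the ``pure zeta'' correction terms: sums of the form $\sum x_i^{r_2-1}(x_{jk})^{r_1-1}\zeta(r_1,r_2,1)$, $\sum x_i^{s-1}\zeta(s,1,1)$, $\sum x_i^{s-1}\zeta(s,2)$, $\sum x_i^{s-1}\zeta(2,s)$, and so on, together with the constants $\zeta(3)$ and $\zeta(2,1)$. Several of these appear in both expressions and partially cancel after the $S_3$-symmetrization; for example, the terms $\sum x_1^{r_2-1}(x_{13})^{r_1-1}\zeta(r_1,r_2,1)$ from the first method and $\sum (x_{12})^{r_1-1}x_2^{r_2-1}\zeta(r_1,r_2,1)$, $\sum x_3^{r_1-1}x_1^{r_2-1}\zeta(r_1,r_2,1)$ from method 3 should combine under $\bigoplus_{\calS}$ to yield exactly the bracket $(x_{13})^{r_1-1}-x_1^{r_1-1}-x_3^{r_1-1}$ that appears on the right-hand side of the theorem; similarly the lone $-\zeta(3)$ survives from the $\zeta(s_1+s_2+s_3)$ generating function in method 3.

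The principal obstacle is purely organizational: the computation involves a large number of summations under three different symmetrization operators ($\bigoplus_{\calS(x_1,x_2,x_3)}$, $\bigoplus_{\calC(x_1,x_2,x_3)}$, and $\bigoplus_{\calC(x_2,x_3)}$), and one must verify that cross-cancellations occur consistently across all six $S_3$-permutations and that cyclically symmetric pieces in one method match $\calS$-symmetric pieces in the other after averaging. Once the grouping by term type is laid out in a table, each identification is a routine manipulation of binomial and geometric sums of the kind already carried out in Subsections 3.1 and 3.3, and no new input beyond the two completed computations is required.
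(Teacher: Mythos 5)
Your proposal is correct and follows exactly the paper's route: the theorem is obtained by equating the two expressions for $\sum_{s_1,s_2,s_3\ge 2} x_1^{s_1-1}x_2^{s_2-1}x_3^{s_3-1}\zeta(s_1)\zeta(s_2)\zeta(s_3)$ derived by the first and third methods and rearranging terms (the paper's own proof is simply ``Clear''). Your additional bookkeeping of which terms land under which symmetrization operator is a faithful elaboration of that same comparison.
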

\begin{proof}
Clear.
\end{proof}

This theorem is equivalent to the following result which can be regarded as
a parametric family of weighted sum formulas.
\begin{thm}\label{thm:1st=3rdabc}
Let $a$, $b$ and $c$ be any real numbers and let $\gs=a+b$. Then for any positive
integer $n\ge 4$ we have
{\allowdisplaybreaks
\begin{multline*}
\bigoplus_{\calS(a,b,c)}\bigg\{
\sum_{\substack{j\ge 2,k,l\ge 1\\ j+k+l=n}}
     \Big(ac(a+b+c)^{j-1}(a+b)^{k-1}b^l-ac(a+b)^{j+k-2}b^l-ac(b+c)^{j-1}b^{k+l-1} \\
+ac b^{j+k+l-2}-a^j b^k c^l \Big) \zeta(j,k,l)
+\sum_{\substack{j\ge 2,k\ge 1\\ j+k=n}}
\Big( c a^j b^{k-1}+c a^k b^{j-1}-\frac{b(a^j c^k+a^k c^j)}{c-b}\Big)\zeta(j,k)\bigg\} \\
-\bigoplus_{\calC(b,c)}\Big\{
\frac{1}{c-b}\left(\frac{ab c^n-bc a^n}{c-a}
    +acb^{n-1}\right)
-\frac{ac(b^{n-1}-a^{n-1})}{b-a}+ca^{n-1}\Big\}\zeta(n) \\
=\bigoplus_{\calS(a,b,c)}\Big\{
\sum_{\substack{j\ge 2,k\ge 1\\ j+k=n-1}}
   \Big( a^k bc\big[(a+c)^{j-1}-a^{j-1}-c^{j-1}\big] \zeta(j,k,1)
-  a^j b c^k\zeta(j,1,k) \Big)\\
+\Big(a^{n-1} c + a^{n-2}bc - \frac{ab c^{n-1}}{c-b}\Big)\zeta(n-1,1)
+\frac12 a^{n-2}bc\big(\zeta(n-2,2)+\zeta(2,n-2)\big)+\frac12 a^{n-2}bc\zeta(n)
\Big\}.
\end{multline*}
}
\end{thm}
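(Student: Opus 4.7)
The plan is to derive Theorem~\ref{thm:1st=3rdabc} as a direct specialization of Theorem~\ref{thm:1st=3rd}. In that earlier identity I would substitute $x_1=at$, $x_2=bt$, $x_3=ct$ with $t$ a formal variable, multiply both sides by $abc\cdot t^3$ in order to clear the $1/x_i$ and $1/(x_ix_j)$ denominators that appear in the $G_2/x_i$ and $G_1/(x_ix_j)$ pieces, and extract the coefficient of $t^n$. Because the factor $abc$ is symmetric in $a,b,c$, it commutes with every symmetrization operator: under the specialization $\bigoplus_{\mathcal{S}(x_1,x_2,x_3)}$ becomes $\bigoplus_{\mathcal{S}(a,b,c)}$, $\bigoplus_{\mathcal{C}(x_2,x_3)}$ becomes $\bigoplus_{\mathcal{C}(b,c)}$, and $\bigoplus_{\mathcal{C}(x_1,x_2,x_3)}$ becomes $\bigoplus_{\mathcal{C}(a,b,c)}$. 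The last of these can be rewritten as $\tfrac12\bigoplus_{\mathcal{S}(a,b,c)}$ on any summand invariant under $b\leftrightarrow c$, which explains the factors of $\tfrac12$ in front of $a^{n-2}bc\,\zeta(n-2,2)$, $a^{n-2}bc\,\zeta(2,n-2)$, and $a^{n-2}bc\,\zeta(n)$ on the right-hand side of Theorem~\ref{thm:1st=3rdabc}.

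The verification is then term by term. A $G_3(y_1,y_2,y_3)$ piece, with each $y_j$ a linear combination of the $x_i$, contributes to the $t^{n-3}$-coefficient the polynomial expression $\sum_{j+k+l=n,\,j\ge 2,\,k,l\ge 1}\bar y_1^{j-1}\bar y_2^{k-1}\bar y_3^{l-1}\zeta(j,k,l)$, where $\bar y_i$ denotes the corresponding linear combination of $a,b,c$; multiplication by $abc$ turns this into weighted polynomials such as $ac(a+b+c)^{j-1}(a+b)^{k-1}b^l$ that appear on the left-hand side of Theorem~\ref{thm:1st=3rdabc}. A piece $G_2(y_1,y_2)/x_i$ contributes $(abc/a_i)\bar y_1^{j-1}\bar y_2^{k-1}\zeta(j,k)$ with $j+k=n$, accounting for the coefficients $ca^jb^{k-1}$, $ca^kb^{j-1}$, and $b(a^jc^k+a^kc^j)/(c-b)$ in the $G_2$ block on the LHS. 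The $G_1$ terms, together with their difference quotients divided by $x_ix_j$, yield the $\zeta(n)$-coefficients; here one uses the identity $abc(c^{n-1}-a^{n-1})/(c-a)=(abc^n-bca^n)/(c-a)$ to recognise the asymmetric polynomial form stated on the LHS of Theorem~\ref{thm:1st=3rdabc}. Finally, the low-weight residual terms $-\zeta(3)$ and $-\zeta(2)/x_i$ of Theorem~\ref{thm:1st=3rd} contribute only to the $t^{\le 2}$-coefficients after multiplication by $abct^3$, so they drop out for $n\ge 4$, matching the hypothesis of Theorem~\ref{thm:1st=3rdabc}.

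The main obstacle is bookkeeping: Theorem~\ref{thm:1st=3rd} has on the order of twenty distinct summands inside its symmetrizations, each of which must be specialized, multiplied by $abct^3$, and have its $t^n$-coefficient read off, after which the resulting polynomials in $a,b,c$ must be matched against the roughly fifteen summands of Theorem~\ref{thm:1st=3rdabc}. The most delicate step is handling the $G_1$ difference quotients so that the coefficients appear in the particular asymmetric form given in the statement; once this identification is made, no new mathematical idea is required, and Theorem~\ref{thm:1st=3rdabc} follows immediately from the already-established Theorem~\ref{thm:1st=3rd}.
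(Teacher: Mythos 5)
Your proposal is correct and follows essentially the same route as the paper, which simply sets $x_1=at$, $x_2=bt$, $x_3=ct$ in Theorem~\ref{thm:1st=3rd} and compares coefficients of $t^{n-3}$. Your explicit multiplication by $abc\,t^3$ to clear the $1/x_i$ and $1/(x_ix_j)$ denominators, and your accounting for the factor $\tfrac12$ via $\bigoplus_{\calC(x_1,x_2,x_3)}=\tfrac12\bigoplus_{\calS(x_1,x_2,x_3)}$ on $b\leftrightarrow c$-invariant summands, are exactly the bookkeeping steps the paper leaves implicit.
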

\begin{proof}
In Theorem~\ref{thm:1st=3rd} we first set $x_1=at$, $x_2=bt$ and $x_3=ct$. Then by comparing
the coefficient of $t^{n-3}$ we get Theorem~\ref{thm:1st=3rdabc}.
\end{proof}

The following weighted sum formula seems to be new.
\begin{cor}\label{cor:1st=3rdabc}
For any positive integer $n\ge 2$ we have
\begin{multline*}
\sum_{\substack{j\ge 2,k,l\ge 1\\ j+k+l=n}}
       2^{j-1} \zeta(j,k,l)
+\sum_{\substack{j\ge 2,k\ge 1\\ j+k=n-1}} 2^j  \zeta(j,k,1)\\
=  n\zeta(n-1,1) +3\zeta(n-2,2)+\zeta(2,n-2)+2\zeta(n)   .
\end{multline*}
\end{cor}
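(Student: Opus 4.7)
The plan is to derive the corollary as a numerical specialization of the parametric identity in Theorem~\ref{thm:1st=3rdabc}. The weights $2^{j-1}$ and $2^{j}=2\cdot 2^{j-1}$ on the left-hand side of the corollary depend only on the first index $j$ of the triple zeta value, strongly suggesting a choice of $(a,b,c)$ for which one of the binomial factors $(a+c)^{j-1}$ or $(a+b+c)^{j-1}$ collapses to $2^{j-1}$ after the $\calS(a,b,c)$-symmetrization. A natural first trial is $(a,b,c)=(1,1,1)$, which makes every occurrence of $a+c$, $a+b$, $b+c$ equal to $2$ and every occurrence of $a+b+c$ equal to $3$; a secondary trial is $(a,b,c)=(1,1,0)$, whose $S_{3}$-orbit $\{(1,1,0),(1,0,1),(0,1,1)\}$ kills many of the cross-terms outright.

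The concrete steps are: (i) substitute the chosen values into both sides of Theorem~\ref{thm:1st=3rdabc} and expand the six orderings in $\bigoplus_{\calS(a,b,c)}$ together with the two cyclic shifts in $\bigoplus_{\calC(b,c)}$; (ii) collect like multiple zeta values; (iii) apply Euler's sum formula~\eqref{equ:EulerSumFormula} together with the Granville--Zagier extension~\eqref{equ:sumConjMZV} to reduce any residual full-range sums of depth-$2$ and depth-$3$ MZVs to single MZVs like $\zeta(n)$ or $\zeta(n-1,1)$; (iv) read off the coefficients of $\zeta(j,k,l)$, $\zeta(j,k,1)$, $\zeta(n-1,1)$, $\zeta(n-2,2)$, $\zeta(2,n-2)$, and $\zeta(n)$ and compare with the corollary's right-hand side.

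The main technical obstacle is the factor $-abc^{n-1}/(c-b)$ appearing in Theorem~\ref{thm:1st=3rdabc}. On its own this expression is singular along the diagonal $b=c$, but the full $\bigoplus_{\calS(a,b,c)}$-sum, together with the cyclic piece $\bigoplus_{\calC(b,c)}$ on the opposite side of the theorem, is genuinely polynomial in $a$, $b$, $c$. Before substituting any values with $b=c$ (e.g.\ $b=c=1$ in the trial $(1,1,1)$), I must therefore split
\[
\frac{c^{n-1}}{c-b}=\sum_{i=0}^{n-2}c^{\,n-2-i}b^{\,i}+\frac{b^{n-1}}{c-b}
\]
and verify that the residual $b^{n-1}/(c-b)$ pieces telescope term-by-term against the corresponding cyclic-sum contributions from the left-hand side. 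Once the identity has been rendered polynomial in $(a,b,c)$, the remaining work --- specialization and matching of powers of $2$ against the stated coefficients $n$, $3$, $1$, $2$ of $\zeta(n-1,1)$, $\zeta(n-2,2)$, $\zeta(2,n-2)$, $\zeta(n)$ --- is routine binomial bookkeeping.
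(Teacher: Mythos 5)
Your overall strategy --- specialize Theorem~\ref{thm:1st=3rdabc} at a well-chosen point, clear the apparent singularities at $b=c$, and then reduce with sum formulas --- matches the paper's first step, but there is a genuine gap at the heart of the plan: no choice of $(a,b,c)$ makes the depth-three weight collapse to $2^{j-1}$. At $(a,b,c)=(1,1,1)$ the symmetrized coefficient of $\zeta(j,k,l)$ is $6\bigl(3^{j-1}2^{k-1}-2^{j+k-2}-2^{j-1}\bigr)$, because the factor $(a+b+c)^{j-1}$ becomes $3^{j-1}$, not $2^{j-1}$; and your fallback $(1,1,0)$ is worse, since every depth-three term in the theorem carries either the factor $ac$ or the factor $a^jb^kc^l$, and the whole $\calS(a,b,c)$-orbit of these vanishes identically once one variable is $0$. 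So the specialization yields a \emph{different} weighted sum formula, namely (after using \eqref{equ:EulerSumFormula}, \eqref{equ:HoffSumFormula2}, \eqref{equ:HoffSumFormula3} and dividing by $3$)
\begin{multline*}
\sum_{\substack{j\ge 2,k,l\ge 1\\ j+k+l=n}}\bigl(3^{j-1}2^{k}-2^{j+k-1}-2^{j}\bigr)\zeta(j,k,l)-\frac{n^2-3n-16}{6}\,\zeta(n)\\
=\sum_{\substack{j\ge 2,k\ge 1\\ j+k=n-1}}2^{j}\,\zeta(j,k,1)-n\zeta(n-1,1)-3\zeta(n-2,2)-\zeta(2,n-2),
\end{multline*}
and passing from this to the stated corollary requires an independent input: the paper invokes Machide's weighted sum formula \cite[Cor.\ 4.1]{Machide2012} to trade the weights $3^{j-1}2^{k}-2^{j+k-1}-2^{j}$ for $2^{j-1}$. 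Without that (or an equivalent relation established by other means, e.g.\ regularized double shuffle), the corollary does not follow from Theorem~\ref{thm:1st=3rdabc} alone, so step (iv) of your plan cannot close.

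A second, smaller gap: your reduction toolkit (Euler's sum formula \eqref{equ:EulerSumFormula} and the Granville--Zagier formula \eqref{equ:sumConjMZV}) only evaluates \emph{full-range} sums, whereas the specialization leaves behind the pinned sums $\sum_{j+k=n-1}\zeta(j,1,k)$ and $\sum_{j+k=n-1}\zeta(j,k,1)$, in which a $1$ sits in a fixed slot. These require the special sum formulas \eqref{equ:HoffSumFormula2} and \eqref{equ:HoffSumFormula3} of Hoffman--Ohno and Hoffman, which you would need to add to the argument. Your treatment of the $b=c$ singularities (polynomial division, or equivalently taking the limits $b\to 1$ and then $c\to 1$ as the paper does) is fine and is not where the difficulty lies.
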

\begin{proof}
Setting $a=1$ and letting $b\to 1$ and then $c\to 1$  in Theorem \ref{thm:1st=3rdabc} we get
\begin{multline*}
\sum_{\substack{j\ge 2,k,l\ge 1\\ j+k+l=n}}
     6\Big(3^{j-1}2^{k-1}-2^{j+k-2}-2^{j-1} \Big) \zeta(j,k,l)
+\sum_{\substack{j\ge 2,k\ge 1\\ j+k=n}}
3 (6-n )\zeta(j,k)
-\frac{n^2-9n+20}{2} \zeta(n) \\
=\sum_{\substack{j\ge 2,k\ge 1\\ j+k=n-1}}
   6\Big( (2^{j-1}-2) \zeta(j,k,1)-\zeta(j,1,k) \Big)
+3 (6-n)\zeta(n-1,1) +3\zeta(n-2,2)+3\zeta(2,n-2)   .
\end{multline*}
The following two special type sum formulas are special cases of \cite[Thm.~2.3]{HoffmanOh2003}
and \cite[Thm.~5.1]{Hoffman1992}, respectively:
\begin{align}
\label{equ:HoffSumFormula2}
  \sum_{\substack{j\ge 2,k\ge 1\\ j+k=n-1}} \zeta(j,1,k)=&\zeta(n-1,1)+\zeta(2,n-2)\\
\label{equ:HoffSumFormula3}
  \sum_{\substack{j\ge 2,k\ge 1\\ j+k=n-1}} \zeta(j,k,1)=&\zeta(n-1,1)+\zeta(n-2,2).
\end{align}
Notice that \eqref{equ:HoffSumFormula3} is a also special case of Eie's generalized sum formula in \cite{ELO2009}. Combining with sum formula \eqref{equ:EulerSumFormula} we get
\begin{multline*}
\sum_{\substack{j\ge 2,k,l\ge 1\\ j+k+l=n}}
     6\Big(3^{j-1}2^{k-1}-2^{j+k-2}-2^{j-1} \Big) \zeta(j,k,l)
-\frac{n^2-3n-16}{2} \zeta(n) \\
=3 \sum_{\substack{j\ge 2,k\ge 1\\ j+k=n-1}} 2^j  \zeta(j,k,1)
-3n\zeta(n-1,1) -9\zeta(n-2,2)-3\zeta(2,n-2)   .
\end{multline*}
Dividing by 3 throughout we get
\begin{multline*}
\sum_{\substack{j\ge 2,k,l\ge 1\\ j+k+l=n}}
      \Big(3^{j-1}2^k-2^{j+k-1}-2^j\Big) \zeta(j,k,l)
-\frac{n^2-3n-16}{6} \zeta(n) \\
= \sum_{\substack{j\ge 2,k\ge 1\\ j+k=n-1}} 2^j  \zeta(j,k,1)
-n\zeta(n-1,1) -3\zeta(n-2,2)-\zeta(2,n-2)   .
\end{multline*}
Hence the corollary follows from \cite[Cor.\ 4.1]{Machide2012}.
\end{proof}

By comparing the second and the third method we can get another identity
involving the generating function $G_3$. However, it is quite
long so here we just write down the following version concerning
a parametric family of weighted sum formulas.

\begin{thm}\label{thm:2nd=3rdabc}
Let $a$, $b$ and $c$ be any real numbers and let $\gs=a+b$. Then for any positive
integer $n\ge 2$ we have
\allowdisplaybreaks
{
\begin{multline*}
\sum_{\substack{j\ge 2,k,l\ge 1\\ j+k+l=n}}
 \bigoplus_{\calC(a,b)} \bigg(a\gs^{j-1}(b^k c^l+b^lc^k)+a\gs^{k-1} b^l c^j\
-\bigoplus_{\calC(j,k,l)}  \Big(a^{j+k-1}bc^l + a^j b^k c^l\Big)\bigg)  \zeta(j,k,l) \\
 -\sum_{\substack{j\ge 2,k\ge 1\\ j+k=n-1}}\bigoplus_{\calC(a,b)}\Big(a \gs^{j-1} b^k c
    -a^{j+k-1}bc-a^j b^k c\Big)\zeta(j,k,1)\\
-\sum_{\substack{j\ge 2,k\ge 1\\ j+k=n-1}}\bigoplus_{\calC(a,b)}
    \Big(a \gs^{j-1} b^k c-a^{j+k-1}bc-a^k b c^j-a^k c b^j\Big) \zeta(j,1,k) \\
= \sum_{\substack{j\ge 2,k\ge 1\\ j+k=n}}\bigg(
\bigoplus_{\calC(a,b,c)}  \bigoplus_{\calC(j,k)} \Big(\frac{a^j(b c^k-c b^k)}{c-b}
-a^j b^{k-1} c-a^j b c^{k-1} \Big) -\bigoplus_{\calC(a,b)}\Big(\frac{\gs^{j-1} a (b c^k- c b^k)}{c-b}-a^j b c^{k-1}\\
 -\gs^{j-1}b^{k-1} ac +\frac{a b^k c^j-\gs^{j-1} a c b^k}{c-b-a}-\gs^{j-2}b^k a c
+ \bigoplus_{\calC(j,k)}  \Big(a c b^{j+k-2} -\frac{b^{k-1} a (b c^j- c b^j)}{c-b}\Big)\Big)\bigg)\zeta(j,k)\\
-\bigoplus_{\calC(a,b)}\Big( a c \gs^{n-2}-a^{n-1}c-c^{n-1}a-2a b^{n-2} c+\frac{a cb^{n-1}-b c a^{n-1}}{2(b-a)}\Big)  \zeta(n-1,1) \\
+\bigoplus_{\calC(b,c)}\bigg(\frac{b^2(a^2 c^{n-1}-a^n c)}{c(c-b)(c-a)}
 +a^{n-1}b \bigg)\zeta(n)+ bca^{n-2}\zeta(n) +abc^{n-2}\zeta(n-2,2) \\
+\bigoplus_{\calC(a,b,c)} a^{n-2}bc \zeta(2,n-2)  -\gd_{n,2}(ab+ac+bc)\zeta(2)-\gd_{n,3}abc(\zeta(2,1)+\zeta(3))
\end{multline*}
}
\end{thm}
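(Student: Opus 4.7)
The plan is to follow the same strategy that produced Theorem~\ref{thm:1st=3rdabc}, only replacing the first method by the second. Both the second and third subsections of Section~3 give closed-form expressions for
$$\sum_{s_1,s_2,s_3\ge 2}x_1^{s_1-1}x_2^{s_2-1}x_3^{s_3-1}\zeta(s_1)\zeta(s_2)\zeta(s_3)$$
in terms of $G_3$, $G_2$, $G_1$ and explicit correction sums. Setting these two closed forms equal produces a single identity of the same shape as Theorem~\ref{thm:1st=3rd}; this is the long generating-function identity that the authors allude to but choose not to display explicitly, and it is the starting point of the proof.

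Next, I would substitute $x_1=at$, $x_2=bt$, $x_3=ct$ into that identity. Each monomial $x_1^{s_1-1}x_2^{s_2-1}x_3^{s_3-1}$ appearing inside $G_d$ becomes $a^{s_1-1}b^{s_2-1}c^{s_3-1}t^{n-d}$ on the weight-$n$ piece, and the rational combinations of $G_1$ and $G_2$ that arose from stuffing Riemann factors into depth-two chains expand into honest power series in $t$ once the obvious common factors of $t$ in their denominators are cancelled. Extracting the coefficient of $t^{n-3}$ then produces an identity among MZVs of weight $n$ with polynomial weights in $a,b,c$; matching those weights to the terms on the right-hand side of the statement is what yields the claimed formula. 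The parameter $\gs=a+b$ in the statement is just shorthand that keeps these polynomials readable.

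The main obstacle will be the sheer bookkeeping: the second-method expression is asymmetric in $x_3$ versus $(x_1,x_2)$ and involves many terms of the forms $G_2(x_i,x_j)/x_k$ and $(G_2(x_i,x_j)-G_2(x_k,x_\ell))/(x_i-x_k)$ symmetrised over $\calS(x_1,x_2)$, so one has to track cancellations between the two methods carefully before reading off the coefficient of $t^{n-3}$. The low-weight correction terms $-\gd_{n,2}(ab+ac+bc)\zeta(2)$ and $-\gd_{n,3}abc(\zeta(2,1)+\zeta(3))$ should arise precisely from the pole pieces $G_1(x_1)/x_1^2$, $\zeta(2)/x_1$, and the constant $\zeta(3)$ that appear in the third-method formula; these contribute to the coefficient of $t^{n-3}$ only when $n$ is small enough that the singular pieces fall within the relevant range, giving the stated Kronecker deltas. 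Finally, to bring the answer into exactly the compact form displayed, I would invoke the sum formula \eqref{equ:EulerSumFormula} together with Hoffman's restricted sum formulas \eqref{equ:HoffSumFormula2}--\eqref{equ:HoffSumFormula3}, and if needed Theorem~\ref{thm:1st=3rdabc} itself, in the same spirit in which Corollary~\ref{cor:1st=3rdabc} was deduced from Theorem~\ref{thm:1st=3rdabc}.
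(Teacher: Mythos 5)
Your proposal follows exactly the paper's own route: equate the second- and third-method expressions for the generating function of $\zeta(s_1)\zeta(s_2)\zeta(s_3)$, substitute $x_1=at$, $x_2=bt$, $x_3=ct$, and extract the coefficient of $t^{n-3}$, just as in the proof of Theorem~\ref{thm:1st=3rdabc}. This is precisely what the authors intend by ``similar to that of Theorem~\ref{thm:1st=3rdabc}'', so the proposal is correct and essentially identical in approach.
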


\begin{proof}
Similar to that of Theorem~\ref{thm:1st=3rdabc}. We leave the details to the interested reader.
\end{proof}

\begin{cor}\label{cor:3RieZetaProd}
Let $n$ be any positive integer. Then
\begin{multline*}
\sum_{\substack{j\ge 2,k,l\ge 1\\ j+k+l=n}}
\big(2^{j+1}+2^k\big)  \zeta(j,k,l)
-\sum_{\substack{j\ge 2,k\ge 1\\ j+k=n-1}}2^j\big(\zeta(j,1,k)+\zeta(j,k,1)\big)
+\sum_{\substack{j\ge 2,k\ge 1\\ j+k=n}} 2^j \cdot k \cdot \zeta(j,k)\\
=\frac{(n+3)(n+1)}2 \zeta(n)-3\zeta(n-2, 2)-(2^{n-1}+n)\zeta(n-1, 1)-3\zeta(2, n-2).
\end{multline*}
\end{cor}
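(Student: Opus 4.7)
The plan is to specialize the identity of Theorem \ref{thm:2nd=3rdabc} at $a=b=c=1$, mimicking the strategy that produced Corollary \ref{cor:1st=3rdabc} from Theorem \ref{thm:1st=3rdabc}. Under this substitution every monomial $a^i b^j c^k$ collapses to $1$ and $\gs=a+b$ becomes $2$; in particular the symbol $\bigoplus_{\calC(a,b)}\bigl(a\gs^{j-1}(b^k c^l+b^l c^k)+a\gs^{k-1} b^l c^j\bigr)$ yields the coefficient $2^{j+1}+2^k$ of $\zeta(j,k,l)$ that appears on the left side of the corollary, while $\bigoplus_{\calC(a,b)} a\gs^{j-1} b^k c$ produces the weight $2^j$ attached to $\zeta(j,k,1)$ and $\zeta(j,1,k)$.

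The delicate step is that several rational expressions on the right-hand side of Theorem \ref{thm:2nd=3rdabc}, namely those containing $1/(c-b)$, $1/(b-a)$, $1/(c-a)$ and $1/(c-b-a)$, individually diverge at $a=b=c=1$. I would therefore carry out the specialization in two stages: first let $b\to a$ for fixed $c\ne a$, applying a first-order Taylor expansion to every factor in which $b-a$ appears; then let $c\to a$ in the resulting expression, handling the remaining $c-a$ factors in the same way. Since Theorem \ref{thm:2nd=3rdabc} is a polynomial identity after clearing denominators, these apparent singularities must cancel, leaving polynomial coefficients in $n$ which, at $a=1$, should reproduce the factors $\tfrac{(n+3)(n+1)}{2}$, $2^{n-1}+n$, and the factors $3$ attached to $\zeta(n)$, $\zeta(n-1,1)$, $\zeta(n-2,2)$ and $\zeta(2,n-2)$ on the right-hand side.

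After the limit is computed the resulting identity will still contain two kinds of unweighted triple sums, $\sum\zeta(j,1,k)$ and $\sum\zeta(j,k,1)$, together with the Euler-type double sum $\sum\zeta(j,k)$. At this point I would invoke Euler's sum formula \eqref{equ:EulerSumFormula} along with Hoffman's identities \eqref{equ:HoffSumFormula2} and \eqref{equ:HoffSumFormula3}, exactly as in the proof of Corollary \ref{cor:1st=3rdabc}, to eliminate every unweighted triple sum in favour of $\zeta(n-1,1)$, $\zeta(n-2,2)$ and $\zeta(2,n-2)$, after which the final rearrangement is a matter of collecting like terms.

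The main obstacle I expect is purely book-keeping: Theorem \ref{thm:2nd=3rdabc} involves four different cyclic symmetrizations $\calC(a,b)$, $\calC(a,b,c)$, $\calC(b,c)$, and $\calC(j,k,l)$, and the two-stage limit requires tracking first and second order Taylor coefficients simultaneously with the correct signs throughout. To guard against arithmetic errors I would compute the coefficient of each shape of multiple zeta value separately, namely $\zeta(j,k,l)$, $\zeta(j,1,k)$, $\zeta(j,k,1)$, $\zeta(j,k)$, and the depth one and depth two boundary terms, and verify the final identity numerically for small values such as $n=4$ and $n=5$ before declaring the calculation complete.
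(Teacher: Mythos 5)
Your overall route is exactly the paper's: set $a=1$ and let $b,c\to 1$ in Theorem~\ref{thm:2nd=3rdabc}, resolving the apparent singularities in the $1/(c-b)$, $1/(b-a)$, $1/(c-a)$ and $1/(c-b-a)$ terms by expansion, and then clean up with known sum formulas. The one concrete gap is in your inventory of what survives the limit and of what is needed to remove it. After the specialization the identity does not contain only the unweighted sums $\sum\zeta(j,1,k)$, $\sum\zeta(j,k,1)$ and $\sum\zeta(j,k)$ that you list: it also contains the full unweighted triple sum $\sum_{j+k+l=n}\zeta(j,k,l)$ (with constant coefficient), which requires the depth-three case of the sum formula \eqref{equ:sumConjMZV}, and, more importantly, a residual \emph{weighted} depth-two sum: the coefficient of $\zeta(j,k)$ that comes out of the limit is $2^{j}(k-1)-2^{j-1}$ plus a constant, so beyond the $\sum 2^{j}k\,\zeta(j,k)$ retained on the left of the corollary there remains a $-3\cdot 2^{j-1}$-weighted piece. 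That piece cannot be evaluated by Euler's sum formula \eqref{equ:EulerSumFormula} or by \eqref{equ:HoffSumFormula2}--\eqref{equ:HoffSumFormula3}; you must also invoke the Ohno--Zudilin weighted sum formula \eqref{equ:OWsum}, $\sum 2^{j}\zeta(j,k)=(n+1)\zeta(n)$, which is precisely the source of the $\frac{(n+3)(n+1)}{2}$ on the right-hand side of the corollary. With \eqref{equ:OWsum} added to your toolkit the argument closes as you describe; without it the final reduction stalls, since the leftover weighted double sum is not expressible through the identities you cite.
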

\begin{proof}
Setting $a=1$ and letting $b,c\to 1$ in Theorem \ref{thm:2nd=3rdabc} we get
\begin{multline*}
\sum_{\substack{j\ge 2,k,l\ge 1\\ j+k+l=n}}
\big(2^{j+1}+2^k-6 \big)  \zeta(j,k,l)
-\sum_{\substack{j\ge 2,k\ge 1\\ j+k=n-1}}\big[(2^j-6 ) \zeta(j,1,k)+(2^j-4) \zeta(j,k,1)\big]\\
+\sum_{\substack{j\ge 2,k\ge 1\\ j+k=n}}\big(2^j(k-1)-2^{j-1}-5n+22\big)\zeta(j,k)\\
=\frac{n^2-9n+32}2 \zeta(n)+\zeta(n-2, 2)-(2^{n-1}+n-10)\zeta(n-1, 1)+3\zeta(2, n-2).
\end{multline*}
Using the weighted sum formula \eqref{equ:OWsum}, the sum formulas \eqref{equ:sumConjMZV}, \eqref{equ:HoffSumFormula2}, and \eqref{equ:HoffSumFormula3} we can derive our corollary quickly.
\end{proof}

\section{Depth 3: product of zeta and double zeta}
Set $\bfx^{\bfs-\bfone}=x_1^{s_1-1}x_2^{s_2-1}x_3^{s_3-1}$ throughout this section.
By \eqref{equ:2Riem1RiemProd} for all $s_1,s_3\ge 2$ and $s_2\ge 1$,
{\allowdisplaybreaks
\begin{align}
 & \sum_{s_1,s_3\ge 2,s_2\ge 1} \bfx^{\bfs-\bfone}\zeta(s_1,s_2)\,\zeta(s_3)  \notag\\
=&\sum_{s_1,s_3\ge 2,s_2\ge 1} \bfx^{\bfs-\bfone}\sum_{
\substack{t_1\ge 2,t_2\ge 1
\\ t_1+t_2=s_1+s_3}}   \hspace{-.4cm}\binom{t_1-1}{s_3-1}
\zeta(t_1,t_2,s_2)\label{equ:triplerhs1}\\
+&\sum_{s_1,s_3\ge 2,s_2\ge 1} \bfx^{\bfs-\bfone}\sum_{
\substack{t_1\ge 2,t_2,t_3\ge 1
\\ t_1+t_2+t_3 \\
=s_1+s_2+s_3 }}
\hspace{-.2cm}
\binom{t_1-1}{s_1-1}\binom{t_2-1}{s_2-t_3}
\zeta(t_1,t_2,t_3)\label{equ:triplerhs2}\\
+&\sum_{s_1,s_3\ge 2,s_2\ge 1} \bfx^{\bfs-\bfone}\sum_{
\substack{t_1\ge 2,t_2,t_3\ge 1
\\ t_1+t_2+t_3 \\
=s_1+s_2+s_3 }}
\hspace{-.2cm}
\binom{t_1-1}{s_1-1}
\binom{t_2-1}{s_2-1}
\zeta(t_1,t_2,t_3).\label{equ:triplerhs3}
\end{align}
}
Similar to the last section we can get
{\allowdisplaybreaks
\begin{align*}
\eqref{equ:triplerhs1}
=&G_3(x_{13},x_1,x_2)-G_3(x_1,x_1,x_2)-\sum_{s_1\ge 2,s_2\ge 1}  x_1^{s_1-1} x_2^{s_2-1}
\zeta(s_1,1,s_2),\\
\eqref{equ:triplerhs2}
=&G_3(x_{13},x_{23},x_2)-G_3(x_3,x_{23},x_2)-G_3(x_1,x_2,x_2),\\
\eqref{equ:triplerhs3}
=&G_3(x_{13},x_{23},x_3)-G_3(x_3,x_{23},x_3)
    -\sum_{s_1\ge 2,s_2\ge 1} x_1^{s_1-1}x_2^{s_2-1} \zeta(s_1,s_2,1)
\end{align*}
}
On the other hand
{\allowdisplaybreaks
\begin{align}
 \sum_{s_1\ge 2,s_2\ge 1,s_3\ge 2}  \bfx^{\bfs-\bfone}\zeta(s_1,s_3,s_2)
=&G_3(x_1,x_3,x_2)-\sum_{s_1\ge 2,s_2\ge 1}  x_1^{s_1-1}x_2^{s_2-1}\zeta(s_1,1,s_2) \label{equ:stuffle1}\\
\sum_{s_1\ge 2,s_2\ge 1,s_3\ge 2}  \bfx^{\bfs-\bfone}\zeta(s_3,s_1,s_2)
=&G_3(x_3,x_1,x_2)-\sum_{s_1\ge 2,s_2\ge 1}  x_1^{s_1-1}x_2^{s_2-1}\zeta(s_1,1,s_2) \label{equ:stuffle2} \\
\sum_{s_1\ge 2,s_2\ge 1,s_3\ge 2}  \bfx^{\bfs-\bfone}\zeta(s_1,s_2,s_3)
=&G_3(x_1,x_2,x_3)-\sum_{s_1\ge 2,s_2\ge 1}  x_1^{s_1-1}x_2^{s_2-1}\zeta(s_1,s_2,1). \label{equ:stuffle3}
\end{align}
}
Also
{\allowdisplaybreaks
\begin{align}
\ &\sum_{s_1\ge 2,s_2\ge 1,s_3\ge 2}  \bfx^{\bfs-\bfone}\zeta(s_1+s_3,s_2)\notag\\
=&\sum_{s_2\ge 1}x_2^{s_2-1}\sum_{s\ge 2}
\left( \sum_{s_1\ge 2,s_3\ge 2,s_1+s_3=s} x_1^{s_1-1}x_3^{s_3-1}\right)  \zeta(s,s_2)\notag \\
=&\sum_{s_2\ge 1}x_2^{s_2-1} \sum_{s\ge 2}
\left( \frac{x_3^{s-1}-x_1^{s-1}}{x_3-x_1}-x_3^{s-2}-x_1^{s-2}+\gd_{s,2}\right)  \zeta(s,s_2) \notag\\
=&\frac{1}{x_3-x_1}\Big(G_2(x_3,x_2)-G_2(x_1,x_2)\Big)-\frac{1}{x_1}G_2(x_1,x_2)-\frac{1}{x_3}G_2(x_3,x_2)
+\sum_{s\ge 1}x_2^{s-1} \zeta(2,s) \label{equ:stuffle4}
\end{align}
}
where $\gd_{s,2}=1$ is $s=2$ and $\gd_{s,2}=0$ otherwise. Similarly
{\allowdisplaybreaks
\begin{align}
\ &\sum_{s_1\ge 2,s_2\ge 1,s_3\ge 2}  \bfx^{\bfs-\bfone}\zeta(s_1,s_2+s_3)\notag\\
=&\sum_{s_1\ge 2}x_1^{s_1-1}\sum_{s\ge 1}
\left( \sum_{s_2\ge 1,s_3\ge 2,s_2+s_3=s} x_2^{s_2-1}x_3^{s_3-1}\right)  \zeta(s_1,s) \notag\\
=&\sum_{s_1\ge 2}x_1^{s_1-1} \sum_{s\ge 1}
\left( \frac{x_3^{s-1}-x_2^{s-1}}{x_3-x_1}-x_2^{s-2}+\gd_{s,1}x_2^{-1}\right)  \zeta(s_1,s) \notag\\
=&\frac{1}{x_3-x_2}\Big(G_2(x_1,x_3)-G_2(x_1,x_2)\Big)-\frac{1}{x_2}G_2(x_1,x_2)
+\frac{1}{x_2} \sum_{s\ge 2}x_1^{s-1}  \zeta(s,1) \label{equ:stuffle5}
\end{align}
}
Notice we have the stuffle relation
\begin{equation*}
\zeta(s_1,s_2)\,\zeta(s_3)=\zeta(s_1,s_2,s_3)+\zeta(s_1,s_3,s_2)+\zeta(s_3,s_1,s_2)
+\zeta(s_1+s_3,s_2)+\zeta(s_1,s_2+s_3).
\end{equation*}
Hence the sum of \eqref{equ:stuffle1} through \eqref{equ:stuffle5} equals the sum of \eqref{equ:triplerhs1} throught \eqref{equ:triplerhs3}, giving the following result.

\begin{thm}\label{thm:dblZetaRiemannZeta}
We have
{\allowdisplaybreaks
\begin{multline}\label{equ:tripleID}
G_3(x_{13},x_1,x_2)-G_3(x_1,x_1,x_2)+G_3(x_{13},x_{23},x_2)
    -G_3(x_3,x_{23},x_2)-G_3(x_1,x_2,x_2)\\
+G_3(x_{13},x_{23},x_3)-G_3(x_3,x_{23},x_3)
-G_3(x_1,x_2,x_3)-G_3(x_1,x_3,x_2)-G_3(x_3,x_1,x_2)\\
=\frac{1}{x_3-x_1}\Big(G_2(x_3,x_2)-G_2(x_1,x_2)\Big)
+\frac{1}{x_3-x_2}\Big(G_2(x_1,x_3)-G_2(x_1,x_2)\Big)\\
-\frac{1}{x_1}G_2(x_1,x_2)-\frac{1}{x_3}G_2(x_3,x_2)-\frac{1}{x_2}G_2(x_1,x_2)\\
-\sum_{s_1\ge 2,s_2\ge 1}  x_1^{s_1-1}x_2^{s_2-1}\zeta(s_1,1,s_2)
+\sum_{s\ge 1}x_2^{s-1} \zeta(2,s)+\frac{1}{x_2} \sum_{s\ge 2}x_1^{s-1}  \zeta(s,1) .
\end{multline}
}
\end{thm}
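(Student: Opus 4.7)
The plan is to establish \eqref{equ:tripleID} by equating the two evaluations of the generating function
\[
\sum_{s_1,s_3\ge 2,\,s_2\ge 1} \bfx^{\bfs-\bfone}\,\zeta(s_1,s_2)\zeta(s_3)
\]
that come from the shuffle relation \eqref{equ:2Riem1RiemProd} and from the stuffle relation for $\zeta(s_1,s_2)\zeta(s_3)$ recorded just before the theorem. Both expansions are essentially written out termwise in \eqref{equ:triplerhs1}--\eqref{equ:triplerhs3} on the shuffle side and in \eqref{equ:stuffle1}--\eqref{equ:stuffle5} on the stuffle side, so what remains is to package each piece into closed form in terms of $G_3$ and $G_2$ and then subtract.

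To treat the shuffle side I would swap the order of summation in each of \eqref{equ:triplerhs1}--\eqref{equ:triplerhs3} so that the outer index is the triple of arguments of $\zeta$, and then collapse the inner binomial sums using the elementary identities of the form $\sum_{s}\binom{t-1}{s-1}x^{s-1}y^{t-s}=(x+y)^{t-1}$. For \eqref{equ:triplerhs1} this produces $G_3(x_{13},x_1,x_2)-G_3(x_1,x_1,x_2)$ together with a boundary correction $-\sum x_1^{s_1-1}x_2^{s_2-1}\zeta(s_1,1,s_2)$ coming from the mismatch between the original ranges $s_1,s_3\ge 2$ and those implicit in $G_3$. The same two-step binomial collapse applied to \eqref{equ:triplerhs2} and \eqref{equ:triplerhs3} yields the pairs $G_3(x_{13},x_{23},x_2)-G_3(x_3,x_{23},x_2)-G_3(x_1,x_2,x_2)$ and $G_3(x_{13},x_{23},x_3)-G_3(x_3,x_{23},x_3)-\sum x_1^{s_1-1}x_2^{s_2-1}\zeta(s_1,s_2,1)$, respectively.

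The stuffle side is more direct: \eqref{equ:stuffle1}--\eqref{equ:stuffle3} are single $G_3$ evaluations with explicit subtractions that enforce $s_1\ge 2$, $s_3\ge 2$. For \eqref{equ:stuffle4} and \eqref{equ:stuffle5}, which come from the collision pieces $\zeta(s_1+s_3,s_2)$ and $\zeta(s_1,s_2+s_3)$, I would compute the inner finite sums using $\sum_{s_1+s_3=s,\,s_1,s_3\ge 2}x_1^{s_1-1}x_3^{s_3-1}=\tfrac{x_3^{s-1}-x_1^{s-1}}{x_3-x_1}-x_1^{s-2}-x_3^{s-2}+\gd_{s,2}$ and the analogous identity in $(x_2,x_3)$; this gives the rational-function combinations of $G_2$ along with the correction terms $\sum x_2^{s-1}\zeta(2,s)$ and $\tfrac{1}{x_2}\sum x_1^{s-1}\zeta(s,1)$ that appear on the right-hand side of \eqref{equ:tripleID}.

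The main obstacle is bookkeeping: each binomial collapse produces boundary contributions from the mismatch between the summation conditions $s_1,s_3\ge 2$, $s_2\ge 1$ and those required by $G_3$ and $G_2$ in \eqref{equ:genFs}. These corrections are all double-indexed sums of weight-$n$ MZVs, and they must be tracked carefully so that, after equating the shuffle and stuffle expressions, exactly the clean $G_3$- and $G_2$-combination on the left of \eqref{equ:tripleID} survives while the surviving $\zeta$-corrections on the right appear with the stated coefficients. Once both sides are assembled the theorem is immediate.
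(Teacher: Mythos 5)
Your proposal is correct and follows essentially the same route as the paper: the section preceding the theorem evaluates the shuffle expansion \eqref{equ:triplerhs1}--\eqref{equ:triplerhs3} and the stuffle expansion \eqref{equ:stuffle1}--\eqref{equ:stuffle5} of $\sum \bfx^{\bfs-\bfone}\zeta(s_1,s_2)\zeta(s_3)$ in terms of $G_3$ and $G_2$ with exactly the boundary corrections you describe, and the theorem is obtained by equating the two (which is why the paper's stated proof is just ``Clear''). Your bookkeeping of the correction terms (one $\zeta(s_1,1,s_2)$-sum from the shuffle side versus two from the stuffle side, and the cancelling $\zeta(s_1,s_2,1)$-sums) reproduces the right-hand side of \eqref{equ:tripleID} as stated.
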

\begin{proof}
Clear.
\end{proof}

\begin{thm}\label{thm:triple=dbl}
Let $a,b$ and $c$ be three real numbers. Then we have
{\allowdisplaybreaks
\begin{multline}\label{equ:triple=dbl}
\sum_{\substack{j\ge 2,k,l\ge 1\\ j+k+l=n}}
\Big[(a+c)^{j-1} a^k b^lc+a(a+c)^{j-1} (b+c)^{k-1}(cb^l+bc^l)\\
-a^{j+k-1} b^lc-a^j b^{k+l-1}c-ac^j (b+c)^{k-1} (b^l+bc^{l-1})
-a^j b^k c^l-a^j c^k b^l-c^j a^k b^l\Big] \zeta(j,k,l)\\
=\sum_{\substack{j\ge 2,k\ge 1\\ j+k=n}}
\Big[\frac{(ac^j-a^j c)b^k}{c-a}+\frac{a^j(bc^k-b^k c)}{c-b}
-a^{j-1} b^k c-a b^k c^{j-1}-a^j b^{k-1} c\Big] \zeta(j,k)\\
-\sum_{k=2}^{n-2} a^k b^{n-1-k}c \zeta(k,1,n-1-k)+ab^{n-2}c\zeta(2,n-2)+a^{n-1}c\zeta(n-1,1).
\end{multline}
}
\end{thm}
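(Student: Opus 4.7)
The plan is completely parallel to the proof of Theorem~\ref{thm:1st=3rdabc}: start from the generating-function identity \eqref{equ:tripleID} of Theorem~\ref{thm:dblZetaRiemannZeta}, substitute $x_1=at$, $x_2=bt$, $x_3=ct$, and then equate coefficients of $t^{n-3}$. The extra factor $abc$ visible throughout \eqref{equ:triple=dbl} arises from clearing the apparent denominators $x_1,x_2,x_3$; equivalently, one may multiply \eqref{equ:tripleID} through by $x_1x_2x_3$ before substituting and then extract the coefficient of $t^{n}$.

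The first step is to verify that the substitution is legitimate on the level of formal power series in $t$. The apparent poles $1/(x_3-x_1)$ and $1/(x_3-x_2)$ cancel against $G_2(x_3,x_2)-G_2(x_1,x_2)$ and $G_2(x_1,x_3)-G_2(x_1,x_2)$, since every monomial in $G_2(x,y)=\sum_{s_1\ge 2}x^{s_1-1}y^{s_2-1}\zeta(s_1,s_2)$ has leading exponent $s_1-1\ge 1$, so the relevant differences are polynomial multiples of the corresponding denominators. The factors $1/x_1,1/x_2,1/x_3$ are absorbed after multiplication by $abc$.

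Next I would carry out the coefficient extraction term by term. Each $G_3$ on the LHS of \eqref{equ:tripleID} contributes one of the eight monomials multiplying $\zeta(j,k,l)$ on the LHS of \eqref{equ:triple=dbl}: for instance $G_3(x_{13},x_1,x_2)$ yields $(a+c)^{j-1}a^{k}b^{l}c\,\zeta(j,k,l)$, the extra $c$ coming from the $abc$ normalization. The five $G_2$-type terms on the RHS of \eqref{equ:tripleID} expand using $(u^{j-1}-v^{j-1})/(u-v)=\sum_{i=0}^{j-2}u^{i}v^{j-2-i}$ to produce the bracketed coefficient of $\zeta(j,k)$ on the RHS of \eqref{equ:triple=dbl}. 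The three explicit sums on the last line of \eqref{equ:tripleID} directly generate the $\zeta(k,1,n-1-k)$, $\zeta(2,n-2)$ and $\zeta(n-1,1)$ contributions respectively.

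The main obstacle is not conceptual but combinatorial: one must collect and simplify the contributions to the coefficient of $\zeta(j,k)$ (which come from all five $G_2$-type summands) and check they collapse exactly to the compact expression shown on the RHS of \eqref{equ:triple=dbl}, with particular care at the boundary indices $j=2$ and $k=1$ where certain partial sums degenerate. Once this bookkeeping is completed the identity follows immediately by equating coefficients.
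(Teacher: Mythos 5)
Your proposal matches the paper's proof exactly: the paper likewise multiplies \eqref{equ:tripleID} by $x_1x_2x_3$, sets $x_1=at$, $x_2=bt$, $x_3=ct$, and compares coefficients of $t^n$. Your additional remarks on the cancellation of the apparent poles and the term-by-term identification are correct elaborations of the same argument.
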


\begin{proof}
Multiplying $x_1x_2x_3$ on \eqref{equ:tripleID}, taking
$x_1=at,x_2=bt, x_3=ct$  and then comparing the coefficients of $t^n$
we arrive at \eqref{equ:triple=dbl} immediately.
\end{proof}

Notice that Theorem \ref{thm:dblZetaRiemannZeta} is very similar to \cite[Thm. 1.1(i)]{Machide2012} but not the same. Moreover, by comparing the two results we obtain the following immediately.
\begin{thm}\label{thm:triple=dblReduceGF}
We have
\begin{multline}\label{equ:triple=dblReduceGF}
G_3(x_1,x_1,x_2)+G_3(x_1,x_2,x_2)
-\frac{1}{x_1}G_2(x_1,x_2)-\frac{1}{x_2}G_2(x_1,x_2)\\
=\sum_{s_1\ge 2,s_2\ge 1}  x_1^{s_1-1}x_2^{s_2-1}\zeta(s_1,1,s_2)
-\sum_{s\ge 1}x_2^{s-1} \zeta(2,s)-\frac{1}{x_2} \sum_{s\ge 2}x_1^{s-1}  \zeta(s,1) .
\end{multline}
\end{thm}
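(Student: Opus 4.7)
The plan is to deduce this identity by subtraction from two pre-existing expressions for the same object. The first expression is the identity \eqref{equ:tripleID} established in Theorem~\ref{thm:dblZetaRiemannZeta}, which was obtained by equating the generating function of $\zeta(s_1,s_2)\zeta(s_3)$ computed via the shuffle product \eqref{equ:2Riem1RiemProd} (giving \eqref{equ:triplerhs1}--\eqref{equ:triplerhs3}) against its expansion via the stuffle relation (giving \eqref{equ:stuffle1}--\eqref{equ:stuffle5}). The second expression is Machide's Theorem~1.1(i) in \cite{Machide2012}, which computes the same generating function $\sum x_1^{s_1-1}x_2^{s_2-1}x_3^{s_3-1}\zeta(s_1,s_2)\zeta(s_3)$ by a slightly different arrangement of the double shuffle relations.

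First I would write out Machide's Theorem~1.1(i) in our notation and confirm that the two identities share the combination
\[
G_3(x_{13},x_1,x_2)+G_3(x_{13},x_{23},x_2)-G_3(x_3,x_{23},x_2)+G_3(x_{13},x_{23},x_3)-G_3(x_3,x_{23},x_3)-G_3(x_1,x_2,x_3)-G_3(x_1,x_3,x_2)-G_3(x_3,x_1,x_2),
\]
so that subtracting the two relations eliminates all $G_3$ arguments involving the variables $x_{13}$, $x_{23}$, or $x_3$ alone. What remains on the generating-function side is precisely
\[
-G_3(x_1,x_1,x_2)-G_3(x_1,x_2,x_2),
\]
plus the $G_2$ remainders appearing on the right of \eqref{equ:tripleID}, namely $\tfrac{1}{x_1}G_2(x_1,x_2)$ and $\tfrac{1}{x_2}G_2(x_1,x_2)$ (the terms of the form $\tfrac{1}{x_3-x_1}(G_2(x_3,x_2)-G_2(x_1,x_2))$ and the analogous ones cancel against Machide's formula, since those arise from the symmetric stuffle terms $\zeta(s_1+s_3,s_2)$ and $\zeta(s_1,s_2+s_3)$ common to both derivations).

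Next, on the pure MZV side, every term involving $x_3$ or symmetric combinations of $x_1,x_2,x_3$ will cancel between the two identities, leaving exactly the three boundary sums
\[
\sum_{s_1\ge 2,s_2\ge 1} x_1^{s_1-1}x_2^{s_2-1}\zeta(s_1,1,s_2),\qquad \sum_{s\ge 1}x_2^{s-1}\zeta(2,s),\qquad \frac{1}{x_2}\sum_{s\ge 2}x_1^{s-1}\zeta(s,1),
\]
with the signs appearing in \eqref{equ:triple=dblReduceGF}. Rearranging gives the desired equality.

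The main obstacle is purely bookkeeping: matching Machide's notation with ours and verifying that each $G_3$, $G_2$, and MZV term cancels with the correct coefficient. There are no new analytic inputs; once the two formulas are aligned variable-by-variable the cancellation is mechanical, which is why the authors remark that the conclusion follows immediately.
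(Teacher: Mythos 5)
Your proposal is correct and follows exactly the paper's own route: the paper proves this theorem by observing that the terms in \eqref{equ:triple=dblReduceGF} are precisely those appearing in Theorem~\ref{thm:dblZetaRiemannZeta} but not in Machide's Theorem~1.1(i), so subtracting the two identities for the generating function of $\zeta(s_1,s_2)\zeta(s_3)$ yields the result. Your additional bookkeeping about which $G_3$ and $G_2$ terms cancel is consistent with (and slightly more explicit than) what the paper records.
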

\begin{proof}
The terms appearing in \eqref{equ:triple=dblReduceGF} are exactly those that are in
Theorem \ref{thm:dblZetaRiemannZeta} but not in \cite[Thm. 1.1(i)]{Machide2012}.
\end{proof}

\begin{thm}\label{thm:triple=dblReduce}
Let $a$ and $b$ be three real numbers. Then we have
\begin{multline}\label{equ:triple=dblReduce}
\sum_{\substack{j\ge 2,k,l\ge 1\\ j+k+l=n}}
\Big[a^{j+k-1} b^l+a^j b^{k+l-1}\Big] \zeta(j,k,l)
-\sum_{\substack{j\ge 2,k\ge 1\\ j+k=n}}
\Big[a^{j-1} b^k +a^j b^{k-1} \Big] \zeta(j,k)\\
=\sum_{k=2}^{n-2} a^k b^{n-1-k} \zeta(k,1,n-1-k)-ab^{n-2}\zeta(2,n-2)-a^{n-1}\zeta(n-1,1).
\end{multline}
\end{thm}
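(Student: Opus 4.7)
The plan is to derive Theorem~\ref{thm:triple=dblReduce} from Theorem~\ref{thm:triple=dblReduceGF} by exactly the same specialization that was used to extract Theorem~\ref{thm:triple=dbl} from \eqref{equ:tripleID}. Concretely, I would first multiply both sides of \eqref{equ:triple=dblReduceGF} through by $x_1 x_2$ to clear the denominators $1/x_1$ and $1/x_2$, then substitute $x_1 = at$ and $x_2 = bt$, and finally compare the coefficient of $t^{n-1}$ on both sides. A quick weight check makes it clear why $t^{n-1}$ is the right power of $t$: every monomial in the claimed identity has combined $(a,b)$-degree equal to $n-1$.

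The term-by-term matching on the left is routine. From $G_3(x_1,x_1,x_2) = \sum_{j\ge 2,\,k,l\ge 1} x_1^{j+k-2} x_2^{l-1}\zeta(j,k,l)$, multiplying by $x_1 x_2$ and substituting yields the contribution $\sum a^{j+k-1} b^l \zeta(j,k,l)$ with $j+k+l=n$; similarly $x_1 x_2\, G_3(x_1,x_2,x_2)$ contributes $\sum a^j b^{k+l-1}\zeta(j,k,l)$. The two subtracted pieces $-x_2 G_2(x_1,x_2) - x_1 G_2(x_1,x_2)$ then produce $-\sum (a^{j-1} b^k + a^j b^{k-1}) \zeta(j,k)$ over $j+k=n$, matching the double-zeta part of the left-hand side of \eqref{equ:triple=dblReduce}.

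On the right, $x_1 x_2 \cdot x_1^{s_1-1} x_2^{s_2-1}\zeta(s_1,1,s_2)$ becomes $a^{s_1} b^{s_2} t^{s_1+s_2}\zeta(s_1,1,s_2)$, so the coefficient of $t^{n-1}$ gives $\sum_{k=2}^{n-2} a^k b^{n-1-k}\zeta(k,1,n-1-k)$, where the range $k \in \{2,\dots,n-2\}$ is dictated by the original constraints $s_1 \ge 2$ and $s_2 \ge 1$. The middle term $-x_1 x_2 \cdot x_2^{s-1}\zeta(2,s)$ contributes $-a b^{n-2}\zeta(2,n-2)$ (from $s=n-2$), and the last term, in which the $1/x_2$ is cancelled to leave $-x_1 \sum_{s\ge 2} x_1^{s-1}\zeta(s,1)$, contributes $-a^{n-1}\zeta(n-1,1)$ (from $s=n-1$). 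This reproduces the right-hand side of \eqref{equ:triple=dblReduce} exactly.

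There is essentially no conceptual obstacle here: the result is a one-line specialization of Theorem~\ref{thm:triple=dblReduceGF}, entirely parallel to the extraction of Theorem~\ref{thm:triple=dbl} from \eqref{equ:tripleID}. The only points worth double-checking are that the summation ranges translate cleanly (the boundary conditions $s_1 \ge 2$ and $s_2 \ge 1$ producing $k \in \{2,\dots,n-2\}$) and that the small values of $n$ do not introduce anomalous boundary terms; both checks are handled exactly as in the proof of Theorem~\ref{thm:triple=dbl}.
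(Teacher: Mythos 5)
Your proof is correct and is exactly the paper's argument: the paper proves this theorem by saying it "follows from Theorem~\ref{thm:triple=dblReduceGF} immediately," and the specialization you spell out (multiply by $x_1x_2$, set $x_1=at$, $x_2=bt$, extract the coefficient of $t^{n-1}$) is precisely that extraction, with all terms matching as you describe.
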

\begin{proof}
This follows from Theorem \ref{thm:triple=dblReduceGF} immediately.
\end{proof}
\begin{cor}\label{cor:triple=dbl}
Let $n$ be a positive integer such that $n\ge 3$. Then
\begin{equation}\label{equ:triple=dblReduce1}
\sum_{\substack{j\ge 2,k,l\ge 1\\ j+k+l=n}}
(2j+k)\zeta(j,k,l)-
\sum_{\substack{k\ge 2,l\ge 1\\ k+l=n-1}} k\zeta(k,1,l)=\zeta(2,n-2)+4\zeta(n)-(3n-5)\zeta(n-1,1).
\end{equation}
\end{cor}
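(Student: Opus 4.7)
The plan is to obtain the corollary from Theorem \ref{thm:triple=dblReduce} by a single partial derivative with respect to $a$ followed by the specialization $a=b=1$, then clean up the residual weighted sums by invoking the earlier sum formulas and the weighted formula of Theorem~\ref{thm:G2Der}.

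First, I would note that the brute specialization $a=b=1$ in \eqref{equ:triple=dblReduce} collapses to the trivial identity $0=0$, because the depth-2 and depth-3 sum formulas \eqref{equ:EulerSumFormula}, \eqref{equ:sumConjMZV} together with Hoffman's identity \eqref{equ:HoffSumFormula2} exactly balance both sides. So I need a more refined linear combination. Applying $\partial/\partial a$ before specialization gives coefficient $(j+k-1)+j = 2j+k-1$ in front of $\zeta(j,k,l)$, coefficient $(j-1)+j = 2j-1$ in front of $\zeta(j,k)$, coefficient $k$ in front of $\zeta(k,1,n-1-k)$, and constants $-1$ and $-(n-1)$ in front of $\zeta(2,n-2)$ and $\zeta(n-1,1)$, respectively.

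Next I would rewrite $2j+k-1 = (2j+k)-1$ and $2j-1=2j-1$, isolating the desired sum $\sum(2j+k)\zeta(j,k,l)$ on the left. The leftover terms $-\sum \zeta(j,k,l)$ and $+\sum\zeta(j,k)$ are handled instantly by \eqref{equ:EulerSumFormula} and \eqref{equ:sumConjMZV}, both equal to $\zeta(n)$ and hence cancelling each other. The remaining troublesome piece is the depth-2 weighted sum $2\sum_{j+k=n,j\ge 2} j\,\zeta(j,k)$, for which I invoke the first identity \eqref{equ:G2Der} of Theorem~\ref{thm:G2Der}:
\[
\sum_{k=2}^{n-1} k\,\zeta(k,n-k)=\zeta(2,n-2)+2\zeta(n)-(n-2)\zeta(n-1,1).
\]

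Substituting this in and collecting the $\zeta(2,n-2)$, $\zeta(n)$, and $\zeta(n-1,1)$ contributions, the coefficient of $\zeta(n-1,1)$ becomes $-2(n-2)-(n-1) = -(3n-5)$, the coefficient of $\zeta(2,n-2)$ becomes $2-1=1$, and the coefficient of $\zeta(n)$ becomes $4$, yielding precisely \eqref{equ:triple=dblReduce1}. No step is genuinely difficult; the only mild obstacle is making sure the partial derivative produces the right weighting $(2j+k)$ rather than $(2j+k-1)$, which requires the careful compensation by $\sum\zeta(j,k,l)=\zeta(n)$ mentioned above. Everything else is bookkeeping.
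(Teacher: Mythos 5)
Your proposal is correct and follows essentially the same route as the paper: differentiate \eqref{equ:triple=dblReduce} in $a$, set $a=b=1$ to obtain the intermediate identity with weights $2j+k-1$ and $2j-1$, and then clean up using \eqref{equ:G2Der} together with the sum formulas \eqref{equ:EulerSumFormula} and \eqref{equ:sumConjMZV}. The bookkeeping of coefficients $1$, $4$, and $-(3n-5)$ checks out exactly as in the paper's argument.
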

\begin{proof}
Differentiating \eqref{equ:triple=dblReduce} with respect to $a$ and then putting $a=b=1$ we get
\begin{multline*}
\sum_{\substack{j\ge 2,k,l\ge 1\\ j+k+l=n}} (2j+k-1)\zeta(j,k,l)
-\sum_{\substack{j\ge 2,k\ge 1\\ j+k=n}} (2j-1) \zeta(j,k)\\
=\sum_{\substack{k\ge 2,l\ge 1\\ k+l=n-1}} k\zeta(k,1,l)-\zeta(2,n-2)-(n-1)\zeta(n-1,1).
\end{multline*}
The corollary now follows from \eqref{equ:G2Der} and the sum formulas \eqref{equ:sumConjMZV} for $d=2,3.$
\end{proof}

The following corollary provides a sum formula relating some special type triple zeta
and double zeta values. It also appeared in \cite{Machide2012} as (5.12)
which is a special case of \cite[Thm.~2.3]{HoffmanOh2003}.
\begin{cor}\label{cor:triple=dbl2}
Let $n$ be a positive integer such that $n\ge 3$. Then
\begin{equation}\label{equ:mid1}
\sum_{k=2}^{n-1} \zeta(k,1,n-k)=\zeta(2,n-1)+\zeta(n,1).
\end{equation}
\end{cor}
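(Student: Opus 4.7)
The plan is to derive \eqref{equ:mid1} as a direct specialization of Theorem~\ref{thm:triple=dblReduce}. The arguments in \eqref{equ:mid1} all have weight $n+1$, so I would first replace $n$ by $n+1$ throughout \eqref{equ:triple=dblReduce} to match the target weight, and then evaluate the resulting identity at $a=b=1$.

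At $a=b=1$, the left-hand side of the shifted \eqref{equ:triple=dblReduce} collapses to
\[
2\sum_{\substack{j\ge 2,k,l\ge 1\\ j+k+l=n+1}} \zeta(j,k,l) \;-\; 2\sum_{\substack{j\ge 2,k\ge 1\\ j+k=n+1}} \zeta(j,k),
\]
and by the sum formula \eqref{equ:sumConjMZV} applied in depths $3$ and $2$ both sums equal $\zeta(n+1)$, so the whole expression vanishes. The right-hand side at $a=b=1$ becomes simply
\[
\sum_{k=2}^{n-1} \zeta(k,1,n-k)-\zeta(2,n-1)-\zeta(n,1),
\]
and equating the two sides yields \eqref{equ:mid1}.

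There is essentially no obstacle here beyond bookkeeping: the only thing to check is that the shift $n\mapsto n+1$ keeps all summation ranges non-vacuous (true for $n\ge 3$) and that the sum formula indeed applies to both the triple and double sums on the left. The identity is thus an immediate consequence of the built-in cancellation that Theorem~\ref{thm:triple=dblReduce} exhibits when the weight parameters are set equal.
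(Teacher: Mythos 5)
Your proof is correct, and it takes a genuinely different route from the paper's. The paper proves this corollary by specializing Theorem~\ref{thm:triple=dbl} at $a=b=1$, $c\to 1$, which leaves weighted triple and double sums that must then be removed using the sum formula \emph{together with} the Guo--Xie weighted sum formula \cite[Theorem 1.1]{GuoXi2009} (the $k=2$ case, essentially the Ohno--Zudilin identity \eqref{equ:OWsum}). You instead shift $n\mapsto n+1$ in Theorem~\ref{thm:triple=dblReduce} and set $a=b=1$, at which point the weights on the left-hand side degenerate to the constant $2$ and the two sums cancel via the ordinary sum formula \eqref{equ:sumConjMZV} in depths $2$ and $3$; the right-hand side is then exactly the claimed identity. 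Your computation checks out: the shifted right-hand side is $\sum_{k=2}^{n-1}\zeta(k,1,n-k)-\zeta(2,n-1)-\zeta(n,1)$, and the ranges are non-vacuous for $n\ge 3$. The trade-off is only in provenance: your argument needs no weighted sum formula at the corollary stage, but it leans on Theorem~\ref{thm:triple=dblReduce}, which the paper obtains by comparison with Machide's result \cite[Thm.~1.1(i)]{Machide2012}, whereas the paper's route through Theorem~\ref{thm:triple=dbl} stays within the identities derived directly in Section~4 plus the external Guo--Xie input. Since Theorem~\ref{thm:triple=dblReduce} precedes the corollary in the paper, your use of it is legitimate, and your derivation is arguably the cleaner of the two.
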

\begin{proof}
Taking $a=b=1$ and let $c\to 1$ in the Theorem \ref{thm:triple=dbl} we get
\begin{multline*}
\sum_{\substack{j\ge 2,k,l\ge 1\\ j+k+l=n}}
(2^{j-1}+2^{j+k-1}+2^k-5) \zeta(j,k,l)-
(n-5) \sum_{\substack{j\ge 2,k\ge 1\\ j+k=n}}\zeta(j,k)\\
= -\sum_{k=2}^{n-2} \zeta(k,1,n-1-k)+\zeta(2,n-2)+\zeta(n-1,1).
\end{multline*}
By sum formula \eqref{equ:sumConjMZV} we see that
\begin{equation*}
\sum_{\substack{j\ge 2,k\ge 1\\ j+k=n}} \zeta(j,k)
=\sum_{\substack{j\ge 2,k,l\ge 1\\ j+k+l=n}} \zeta(j,k,l)=\zeta(n).
\end{equation*}
Now \eqref{equ:mid1} quickly follows from the weighted sum formula of Guo and Xie
\cite[Theorem 1.1]{GuoXi2009} (setting $k=2$ there).
\end{proof}

\section{Depth 4: product of two double zetas}
In this last section we turn our attention to depth 4 case and derive some new families of
MZV identities using the idea of generating functions developed as above. Throughout this section
we set $\bfx^{\bfs-\bfone}=x_1^{s_1-1}x_2^{s_2-1}x_3^{s_3-1}x_4^{s_4-1}$. We also use the short hand
$t_{ij}=t_i+t_j$, $s_{ij}=s_i+s_j$ and so on.

First, for integers $s_1,s_3\ge 2$ and $s_2,s_4\ge 1$, Guo and Xie proved the following
at the end of \cite{GuoXi2009}
{\allowdisplaybreaks
\begin{align*}
& \zeta(s_1,s_2)\,\zeta(s_3,s_4) \notag\\
=& \sum\limits_{ \substack{t_1\ge 2,t_2,t_3\ge 1 \\ t_{123}=s_{123}}}
  \binom{t_1-1}{s_1-1}\binom{t_2-1}{s_2-1}\zeta(t_1,t_2,t_3,s_4)\\
+& \sum_{ \substack{t_1\ge 2,t_2,t_3\ge 1\\ t_{123}=s_{134}}}
 \binom{t_1-1}{s_3-1}\binom{t_2-1}{s_4-1} \zeta(t_1,t_2,t_3,s_2)\\
+& \sum_{ \substack{t_1\ge 2,t_2,t_3,t_4\ge 1\\ t_{1234}=s_{1234}}}
 \bigg [\binom{t_1-1}{s_1-1} \binom{t_2-1}{t_{12}-s_{13}}
 \bigg (\binom{t_3-1}{s_4-t_4} +  \binom{t_3-1}{s_4-1}\bigg) \notag \\
& \quad \quad \quad\quad \quad  + \binom{t_1-1}{s_3-1}\binom{t_2-1}{t_{12}-s_{13}}
 \bigg (\binom{t_3-1}{s_2-t_4}+\binom{t_3-1}{s_2-1}\bigg) \bigg ]  \zeta(t_1,t_2,t_3,t_4).
\end{align*}
}
Hence
 {\allowdisplaybreaks
\begin{align}
\lefteqn{\sum_{\substack{s_1, s_3\geq 2\\ s_2, s_4\geq 1}}
    \bfx^{\bfs-\bfone}\zeta(s_1,s_2)\,\zeta(s_3,s_4)} \notag\\
=& \sum_{\substack{s_1, s_3\geq 2\\ s_2, s_4\geq 1}}\bfx^{\bfs-\bfone}
\sum_{ \substack{t_1\ge 2,t_2,t_3\ge 1\\ t_{123}=s_{123}}}
     \binom{t_1-1}{s_1-1}\binom{t_2-1}{s_2-1}\zeta(t_1,t_2,t_3,s_4) \label{equ:2dblZ1}\\
+& \sum_{\substack{s_1, s_3\geq 2\\ s_2, s_4\geq 1}}
    \bfx^{\bfs-\bfone}\sum_{ \substack{t_1\ge 2,t_2,t_3\ge 1\\ t_{123}=s_{134} }}
    \binom{t_1-1}{s_3-1}\binom{t_2-1}{s_4-1}\zeta(t_1,t_2,t_3,s_2) \label{equ:2dblZ2}\\
+&\sum_{\substack{s_1, s_3\geq 2\\ s_2, s_4\geq 1}}\bfx^{\bfs-\bfone}
\sum_{ \substack{t_1\ge 2,t_2,t_3,t_4\ge 1\\ t_{1234}=s_{1234} }}
\binom{t_1-1}{s_1-1} \binom{t_2-1}{t_{12}-s_{13}}
 \binom{t_3-1}{s_4-t_4}  \zeta(t_1,t_2,t_3,t_4)
 \label{equ:2dblZ3}
\\
+&\sum_{\substack{s_1, s_3\geq 2\\ s_2, s_4\geq 1}}\bfx^{\bfs-\bfone}
\sum_{ \substack{t_1\ge 2,t_2,t_3,t_4\ge 1\\ t_{1234}=s_{1234} }}
\binom{t_1-1}{s_1-1} \binom{t_2-1}{t_{12}-s_{13}}
  \binom{t_3-1}{s_4-1}  \zeta(t_1,t_2,t_3,t_4)
 \label{equ:2dblZ4}
\\
+&\sum_{\substack{s_1, s_3\geq 2\\ s_2, s_4\geq 1}}\bfx^{\bfs-\bfone}
\sum_{ \substack{t_1\ge 2,t_2,t_3,t_4\ge 1\\ t_{1234}= \\ s_{1234} }}
\binom{t_1-1}{ s_3-1}\binom{t_2-1}{
t_{12}-s_{13}} \binom{t_3-1}{s_2-t_4}  \zeta(t_1,t_2,t_3,t_4)\label{equ:2dblZ5}\\
+&\sum_{\substack{s_1, s_3\geq 2\\ s_2, s_4\geq 1}}\bfx^{\bfs-\bfone}
\sum_{ \substack{t_1\ge 2,t_2,t_3,t_4\ge 1\\ t_{1234}=s_{1234} }}
\binom{t_1-1}{ s_3-1}\binom{t_2-1}{
t_{12}-s_{13}} \binom{t_3-1}{s_2-1}  \zeta(t_1,t_2,t_3,t_4).\label{equ:2dblZ6}
\end{align}
}
As before we can get
 {\allowdisplaybreaks
\begin{align*}
\eqref{equ:2dblZ1}
& =  G_4(x_{13}, x_{23}, x_3, x_4)-G_4(x_3, x_{23}, x_3, x_4)
 - \sum_{\substack{s_1\geq 2\\ s_2, s_4\geq 1}}x_1^{s_1-1}x_2^{s_2-1}x_4^{s_4-1}
\zeta(s_1, s_2, 1, s_4).\\
\eqref{equ:2dblZ3}
=& G(x_{13}, x_{23}, x_{24}, x_4)-G(x_3, x_{23}, x_{24}, x_4)- G(x_1, x_2, x_{24}, x_4) \\
\eqref{equ:2dblZ4}
=& G(x_{13}, x_{23}, x_{24}, x_2)- G(x_3, x_{23}, x_{24}, x_2)- G(x_1, x_2, x_{24}, x_2) \\
\eqref{equ:2dblZ2}
& = \gs_{x_1,x_3}\gs_{x_2,x_4} \eqref{equ:2dblZ1}, \quad
\eqref{equ:2dblZ5}
=\gs_{x_1,x_3}\gs_{x_2,x_4} \eqref{equ:2dblZ3}, \quad
\eqref{equ:2dblZ6}
= \gs_{x_1,x_3}\gs_{x_2,x_4} \eqref{equ:2dblZ4}.
\end{align*}}
On the other hand, by the stuffle relation
{\allowdisplaybreaks
\begin{align*}
&\zeta(s_1, s_2)\zeta(s_3, s_4) = \zeta(s_3, s_4, s_1, s_2)+ \zeta(s_3, s_1, s_4, s_2)+ \zeta(s_1, s_3, s_4, s_2)
 +\zeta(s_1, s_3, s_2, s_4)\\
& +\zeta(s_1, s_2, s_3, s_4)+ \zeta(s_3, s_1, s_2,s_4)+ \zeta(s_{13}, s_2, s_4)+ \zeta(s_1, s_{23}, s_4)+ \zeta(s_{13}, s_2,s_4)\\
&+ \zeta(s_3, s_{14}, s_2)+ \zeta(s_1, s_3, s_{24})+ \zeta(s_3, s_1, s_{24})+ \zeta(s_{13}, s_{24}).\\
\end{align*}
}
Thus we have
{\allowdisplaybreaks
\begin{align}
\ &\sum_{\substack{s_1, s_3\geq 2\\ s_2, s_4\geq 1}}
    \bfx^{\bfs-\bfone}\zeta(s_1,s_2)\,\zeta(s_3,s_4) \notag\\
=& \sum_{\substack{s_1, s_3\geq 2\\ s_2, s_4\geq 1}}\bfx^{\bfs-\bfone}
\zeta(s_3,s_4,s_1,s_2) + \sum_{\substack{s_1, s_3\geq 2\\ s_2, s_4\geq 1}}\bfx^{\bfs-\bfone}
\zeta(s_3,s_1,s_4,s_2)\label{equ:2sdblZ1}\\
+& \sum_{\substack{s_1, s_3\geq 2\\ s_2, s_4\geq 1}}\bfx^{\bfs-\bfone}
\zeta(s_1,s_3,s_4,s_2)+ \sum_{\substack{s_1, s_3\geq 2\\ s_2, s_4\geq 1}}\bfx^{\bfs-\bfone}
\zeta(s_1,s_3,s_2,s_4)\label{equ:2sdblZ2}\\
+&\sum_{\substack{s_1, s_3\geq 2\\ s_2, s_4\geq 1}}\bfx^{\bfs-\bfone}
\zeta(s_1,s_2,s_3,s_4)+ \sum_{\substack{s_1, s_3\geq 2\\ s_2, s_4\geq 1}}\bfx^{\bfs-\bfone}
\zeta(s_3,s_1,s_2,s_4)\label{equ:2sdblZ3}\\
+&\sum_{\substack{s_1, s_3\geq 2\\ s_2, s_4\geq 1}}\bfx^{\bfs-\bfone}
\zeta(s_{13}, s_4, s_2)+ \sum_{\substack{s_1, s_3\geq 2\\ s_2, s_4\geq 1}}\bfx^{\bfs-\bfone}
\zeta(s_1,s_{23},s_4)
 \label{equ:2sdblZ4}
\\
+&\sum_{\substack{s_1, s_3\geq 2\\ s_2, s_4\geq 1}}\bfx^{\bfs-\bfone}
\zeta(s_{13},s_2,s_4)+ \sum_{\substack{s_1, s_3\geq 2\\ s_2, s_4\geq 1}}\bfx^{\bfs-\bfone}
\zeta(s_3,s_4+s_1,s_2)\label{equ:2sdblZ5}\\
+&\sum_{\substack{s_1, s_3\geq 2\\ s_2, s_4\geq 1}}\bfx^{\bfs-\bfone}
\zeta(s_1,s_3,s_{24})+ \sum_{\substack{s_1, s_3\geq 2\\ s_2, s_4\geq 1}}\bfx^{\bfs-\bfone}
\zeta(s_3,s_1,s_{24})\label{equ:2sdblZ6}\\
+& \sum_{\substack{s_1, s_3\geq 2\\ s_2, s_4\geq 1}}\bfx^{\bfs-\bfone}
\zeta(s_{13},s_{24})\label{equ:2sdblz7}.
\end{align}
}
One can verify the following identities as before:
{\allowdisplaybreaks
\begin{align*}
\eqref{equ:2sdblZ1}
=& G_4(x_3, x_4, x_1, x_2)+G_4(x_3, x_1, x_4, x_2)-\sum_{\substack{s_3\geq 2\\s_2, s_4\geq 1}}x_2^{s_2-1}x_3^{s_3-1}x_4^{s_4-1}\zeta(s_3, s_4, 1, s_2)\\
& \hskip2cm -\sum_{\substack{s_3\geq 2\\s_2, s_4\geq 1}}x_2^{s_2-1}x_3^{s_3-1}x_4^{s_4-1}\zeta(s_3, 1, s_4, s_2)\\
\eqref{equ:2sdblZ2}
=& G_4(x_1, x_3, x_4, x_2)+G_4(x_1, x_3, x_2, x_4)-\sum_{\substack{s_1\geq 2\\s_2, s_4\geq 1}}x_1^{s_1-1}x_2^{s_2-1}x_4^{s_4-1}\zeta(s_1, 1, s_4, s_2)\\
& \hskip2cm -\sum_{\substack{s_1\geq 2\\s_2, s_4\geq 1}}x_1^{s_1-1}x_2^{s_2-1}x_4^{s_4-1}\zeta(s_1, 1, s_2, s_4)\\
\eqref{equ:2sdblZ3}
=& G_4(x_1, x_2, x_3, x_4)+G_4(x_3, x_1, x_2, x_4)-\sum_{\substack{s_1\geq 2\\s_2, s_4\geq 1}}x_1^{s_1-1}x_2^{s_2-1}x_4^{s_4-1}\zeta(s_1, s_2, 1, s_4)\\
& \hskip2cm -\sum_{\substack{s_3\geq 2\\s_2, s_4\geq 1}}x_2^{s_2-1}x_3^{s_3-1}x_4^{s_4-1}\zeta(s_3, 1, s_2, s_4)\\
\eqref{equ:2sdblZ4}
=& \bigoplus_{\calC(x_1,x_3)}\Big\{\frac{G_3(x_1, x_4, x_2)}{x_1-x_3}-\frac{G_3(x_1, x_4, x_2)}{x_1}\Big\}
+ \frac{G_3(x_1, x_2, x_4)- G_3(x_1, x_3, x_4)}{x_2-x_3} \\
& \hskip2cm -\frac{G_3(x_1, x_2, x_4)}{x_2}+ \sum_{s, t\geq 1}x_2^{s-1}x_4^{t-1}\zeta(2,t,s)
 +\frac{1}{x_2}\sum_{\substack{s\geq 2\\ t \geq 1}}x_1^{s-1}x_4^{t-1}\zeta(s, 1, t)\\
\eqref{equ:2sdblZ5}
=&  \gs_{x_1,x_3}\gs_{x_2,x_4} \eqref{equ:2sdblZ4}\\
\eqref{equ:2sdblZ6}
=& \bigoplus_{\calC(x_2,x_4)}\Big\{ \bigoplus_{\calC(x_1,x_3)}\Big\{\frac{G_3(x_1, x_3, x_2)}{x_2-x_4}- \frac{1}{x_2-x_4}\sum_{\substack{t\geq 2\\ s\geq 1}}x_1^{t-1}x_2^{s-1}\zeta(t, 1, s)\Big\}\Big\}\\
\eqref{equ:2sdblz7}
=& \bigoplus_{\calC(x_2,x_4)}\Big\{ \bigoplus_{\calC(x_1,x_3)}\Big\{
\left(\frac{1}{x_1-x_3}-\frac{1}{x_1}\right)\frac{G_2(x_1, x_2)}{x_2-x_4} \Big\}
+ \frac{1}{x_2-x_4}\sum_{t\geq 1} x_2^{t-1}\zeta(2, t)\Big\}.
\end{align*}
}
Hence the sum of \eqref{equ:2dblZ1} to \eqref{equ:2dblZ6} is equal to the sum of \eqref{equ:2sdblZ1} to \eqref{equ:2sdblz7}. This equality establishes an identity involving $G_4$ and four formal variables
$x_1,\dots,x_4$. By taking $x_1=at,x_2=bt,x_3=ct$ and $x_4=dt$ and comparing the coefficient of $t^{n-4}$ we can obtain our last theorem.

Let $S_1=\{e, \sigma_{a, c}\sigma_{b, d}\}$, where $\sigma_{a, c}$ (or $\sigma_{b, d}$) denotes the transposition that switches $a$ and $c$ (or $b$ and $d$) and $S_2=\{ e,\sigma_{a,c}\sigma_{b, d}, \sigma_{a,c}, \sigma_{b,c}, \sigma_{b,d}, \sigma_{c, d, b, a}\}.$ For any subset $S$ of the symmetric group $\mathfrak{S}_4,$ let $\displaystyle\bigoplus_{S} f(a, b, c, d)= \displaystyle\sum_{\sigma\in S} f(\sigma(a), \sigma(b), \sigma(c), \sigma(d)).$
\begin{thm}\label{thm:depth4}
Let $a, b$, $c$ and $d$ be any real numbers. Then for any positive integer $n\geq 2$, we have
{\allowdisplaybreaks
\begin{multline*}
\bigoplus_{S_1}\Big\{
\sum_{\substack{i\geq 2\\ i+j+k+l=n}}[abc(a+c)^{i-1}(b+c)^{j-1} -abc^{i}(b+c)^{j-1}
-ca^{i}b^{j}](b+d)^{k-1}d^{l}\cdot\zeta(i, j, k, l) \\
+ \sum_{\substack{i\geq 2\\ i+j+k+l=n}}[(a+c)^{i-1}-c^{i-1}]ab(b+c)^{j-1}c^{k}d^{l} \zeta(i,j, k, l)+\sum_{\substack{i\geq 2\\ i+j+l=n-1}}(b^{j}d^{l}+d^{j}b^{l}) ca^{i}\zeta(i, 1, j, l) \\
+\sum_{\substack{i\geq 2\\ i+j+k+l=n}}[acd(a+c)^{i-1}(b+c)^{j-1} -ad c^{i}(b+c)^{j-1}
-cda^{i}b^{j-1}](b+d)^{k-1}b^{l}\cdot\zeta(i, j, k, l)\Big\}\\
= \bigoplus_{S_2}\Big\{\sum_{\substack{i\geq 2\\ i+j+k+l=n}}a^{i}b^{j}c^{k}d^{l}\zeta(i, j, k, l) \Big\}
+\bigoplus_{S_1}\Big\{
\sum_{\substack{i\geq 2\\ i+j+k=n}}\left(\frac{cb^{j}-bc^{j}}{b-c} - cb^{j-1}\right)a^{i}d^{k}\zeta(i,j, k) \\
+\sum_{\substack{i\geq 2\\ i+j+k=n}}\left(\frac{ca^{i}-ac^{i}}{a-c}- ca^{i-1}
    - ac^{i-1} \right)b^jd^{k}\zeta(i, j, k)+ \sum_{\substack{i\geq 2\\ i+j+k=n}} \frac{db^{k}-bd^{k}}{b-d}a^{i}c^{j}\zeta(i,j,k)\\
+\sum_{j+k=n-2}acd^j b^k \zeta(2, j, k) + \sum_{\substack{i\geq 2\\i+k=n-1}}c a^{i}d^{k}\zeta(i, 1, k)
    - \sum_{\substack{i\geq 2\\ i+k=n-1}} \frac{db^{k}-bd^{k}}{b-d}a^{i}c  \zeta(i, 1, k)\Big\}\\
+ \sum_{\substack{i\geq 2,\\ i+j=n}}\left(\frac{ca^{i}-ac^{i}}{a-c}-ca^{i-1}-ac^{i-1}\right)
\frac{db^{j}-bd^{j}}{b-d}\zeta(i, j) + \frac{ac(db^{n-2}-bd^{n-2})}{b-d}\zeta(2, n-2).
\end{multline*}
}
\end{thm}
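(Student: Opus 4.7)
The proof follows the template set up in Section 5: compute $\sum_{\substack{s_1,s_3\ge 2\\ s_2,s_4\ge 1}}\bfx^{\bfs-\bfone}\zeta(s_1,s_2)\zeta(s_3,s_4)$ in two ways and equate the results. The shuffle-type expansion, consisting of \eqref{equ:2dblZ1}--\eqref{equ:2dblZ6}, comes from the Guo--Xie product formula; the stuffle-type expansion \eqref{equ:2sdblZ1}--\eqref{equ:2sdblz7} comes from applying the stuffle relation in the manner displayed just before \eqref{equ:2sdblZ1}. Both sides have already been rewritten in the excerpt in terms of the generating functions $G_2,G_3,G_4$, together with explicit boundary sums accounting for the restrictions $s_1,s_3\ge 2$, so equating them produces a single formal identity in the four variables $x_1,x_2,x_3,x_4$.

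The next step is the specialization $x_1=at,\,x_2=bt,\,x_3=ct,\,x_4=dt$. Each term $G_d(\alpha_1t,\dots,\alpha_dt)$ becomes a power series in $t$ whose coefficient of $t^{w-d}$ is a finite linear combination of weight-$w$ MZVs with polynomial coefficients in $a,b,c,d$; divided-difference expressions of the form $(G_d(\dots)-G_d(\dots))/(x_i-x_j)$ expand into symbols such as $(db^k-bd^k)/(b-d)$ visible on the right-hand side of the theorem. Extracting the coefficient of $t^{n-4}$ from the formal identity then yields the desired relation among MZVs of weight $n$.

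The main obstacle is purely organizational. The shuffle side \eqref{equ:2dblZ1}--\eqref{equ:2dblZ6} is invariant under the swap $(x_1,x_2)\leftrightarrow(x_3,x_4)$, which under the substitution becomes the transposition $\sigma_{a,c}\sigma_{b,d}$; this accounts for the two-element group $S_1$. On the stuffle side, the depth-3 terms $\zeta(s_{13},s_2,s_4)$, $\zeta(s_1,s_{23},s_4)$, $\zeta(s_3,s_{14},s_2)$, $\zeta(s_1,s_3,s_{24})$, $\zeta(s_3,s_1,s_{24})$, together with $\zeta(s_{13},s_{24})$, contribute the additional permutations that collectively form the six-element set $S_2$. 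The bulk of the work is to match the many $G_4$-, $G_3$-, and $G_2$-type terms on the two sides, group them into orbits under $S_1$ and $S_2$, and identify the residual lower-order contributions (the $\zeta(2,j,k)$, $\zeta(i,1,k)$, and $\zeta(2,n-2)$ terms). Since the expansions of the generating functions in the excerpt have been done in a form compatible with both symmetry groups, no further MZV identity is invoked; once the accounting is completed, the theorem follows directly.
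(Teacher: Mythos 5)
Your proposal matches the paper's proof: the authors likewise equate the Guo--Xie (shuffle-type) expansion \eqref{equ:2dblZ1}--\eqref{equ:2dblZ6} with the stuffle expansion \eqref{equ:2sdblZ1}--\eqref{equ:2sdblz7}, substitute $x_1=at,\dots,x_4=dt$, and read off the coefficient of $t^{n-4}$. One small bookkeeping slip: the six-element set $S_2$ governs the genuine depth-4 stuffle terms \eqref{equ:2sdblZ1}--\eqref{equ:2sdblZ3} (the six interleavings of $(s_1,s_2)$ with $(s_3,s_4)$), not the contracted depth-3 and depth-2 terms, which instead produce the $G_3$- and $G_2$-type contributions organized under $S_1$.
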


\begin{cor}\label{cor:depth4}
Let $n\geq 5$ be any positive integer. Then
\begin{multline}\label{equ:corFinal}
2\sum_{\substack{i\geq 2\\i+j+l=n-1}}\zeta(i, 1, j, l)- \sum_{\substack{i\geq 2\\i+j+k+l=n}}(2^{i-1}+2^k)\zeta(i, j, k, l)-\sum_{\substack{k\geq 2\\k+j=n-1}}k\zeta(k, 1, j)\\
= 2\zeta(2, n-2)+ (n-3)\zeta(n-2,2)-(2n-5)\zeta(n-1,1)+\frac{n+5}{2} \zeta(n).
\end{multline}
\end{cor}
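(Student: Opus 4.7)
The plan is to specialize Theorem~\ref{thm:depth4} by setting $a=1$ and letting $b,c,d \to 1$, paralleling the derivation of Corollary~\ref{cor:1st=3rdabc} from Theorem~\ref{thm:1st=3rdabc}. The target coefficients $2^{i-1}$ and $2^{k}$ in \eqref{equ:corFinal} come from $(a+c)^{i-1}$ and $(b+d)^{k-1}$ at unit values, which confirms this is the correct specialization.

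First I would handle the indeterminate quotients appearing in Theorem~\ref{thm:depth4}. By L'Hopital (or a short Taylor expansion),
\[
\lim_{c\to b}\frac{cb^{j}-bc^{j}}{b-c}=(j-1)b^{j},
\]
and similarly $(ca^{i}-ac^{i})/(a-c)\to(i-1)a^{i}$ and $(db^{k}-bd^{k})/(b-d)\to(k-1)b^{k}$, so at $a=b=c=d=1$ these quotients become $i-1$, $j-1$, $k-1$ respectively. I would also verify that the extra $ac(db^{n-2}-bd^{n-2})/(b-d)\,\zeta(2,n-2)$ term limits cleanly to $(n-3)\zeta(2,n-2)$, which is presumably where the coefficient $(n-3)$ appearing in \eqref{equ:corFinal} originates.

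Next I would expand the $\bigoplus_{S_1}$ and $\bigoplus_{S_2}$ operators. Because each permutation in $S_1$ and $S_2$ acts trivially on the all-ones tuple but non-trivially on the $\zeta$-argument lists (via the implicit relabeling of summation indices), each $S_1$-orbit gives two copies of summed MZVs (with different argument orderings) and each $S_2$-orbit gives six. Collecting coefficients on the left then yields, for each $\zeta(i,j,k,l)$, an expression involving $2^{i-1}, 2^{j-1}, 2^{k-1}, 2^{l-1}$ and monomial products, and a single $\zeta(i,1,j,l)$ term of the Hoffman type; on the right one gets depth-$3$ and depth-$2$ MZVs plus the special $\zeta(2,n-2)$ term.

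Finally, after bookkeeping, I would eliminate all non-quadruple contributions using the toolbox already assembled in the paper: the Granville--Zagier sum formula \eqref{equ:sumConjMZV} at depths $2,3,4$, the Ohno--Zudilin weighted sum formula \eqref{equ:OWsum}, Hoffman's identities \eqref{equ:HoffSumFormula2} and \eqref{equ:HoffSumFormula3}, Corollary~\ref{cor:triple=dbl2} controlling $\sum\zeta(k,1,l)$, Theorem~\ref{thm:G2Der} for $\sum k\zeta(k,n-k)$, and crucially Corollary~\ref{cor:1st=3rdabc}, whose left-hand side contains exactly the combination $\sum 2^{j-1}\zeta(j,k,l)+\sum 2^{j}\zeta(j,k,1)$ that will appear in the depth-$3$ remainder. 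The main obstacle is purely combinatorial bookkeeping: the $S_2$-sum alone produces six different orderings of each $\zeta(i,j,k,l)$ with different $i\ge2$ constraints, and pinning down the exact coefficients $(2n-5)$ of $\zeta(n-1,1)$ and $(n+5)/2$ of $\zeta(n)$ on the right-hand side of \eqref{equ:corFinal} will require carefully combining the weighted and unweighted sum formulas so that the final cancellations line up.
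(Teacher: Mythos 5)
Your overall strategy coincides with the paper's: specialize Theorem~\ref{thm:depth4} at the all-ones point (the paper sets $a=b=1$ and lets $c,d\to 1$), evaluate the indeterminate quotients by L'Hopital --- your limits $(j-1)b^j$, $(i-1)a^i$, $(k-1)b^k$ and the resulting coefficient $(n-3)$ of $\zeta(2,n-2)$ are all correct --- and observe that at that point $\bigoplus_{S_1}$ and $\bigoplus_{S_2}$ contribute overall factors of $2$ and $6$.

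There is, however, a genuine gap in your cleanup phase. The specialization produces on the left the depth-four weighted sum
\[
2\sum_{\substack{i\ge 2\\ i+j+k+l=n}}\bigl(2^{i+j+k-2}-2^{j+k-1}-2^{k}+2^{i+j-2}-2^{j-1}\bigr)\zeta(i,j,k,l),
\]
and turning this into the combination $-\sum(2^{i-1}+2^{k})\zeta(i,j,k,l)$ of \eqref{equ:corFinal} requires nontrivial relations among depth-four weighted sums with exponents $i+j+k-2$, $j+k-1$, $i+j-2$, $j-1$. None of the tools you list can supply these: the Ohno--Zudilin formula \eqref{equ:OWsum} is a depth-two statement, Corollary~\ref{cor:1st=3rdabc} is depth three, and the sum formula \eqref{equ:sumConjMZV} carries no weights. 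The paper invokes the Guo--Xie weighted sum formula \cite[Theorem 1.1]{GuoXi2009} with $k=4$ precisely for this reduction, and without it (or an equivalent depth-four identity) your argument stalls. Relatedly, your expectation that Corollary~\ref{cor:1st=3rdabc} will be ``crucial'' because a weighted depth-three remainder of the form $\sum 2^{j-1}\zeta(j,k,l)+\sum 2^{j}\zeta(j,k,1)$ will appear is mistaken: in Theorem~\ref{thm:depth4} the factors $(a+c)^{i-1}$, $(b+c)^{j-1}$, $(b+d)^{k-1}$ multiply only the depth-four terms, so after specialization the depth-three and depth-two remainders carry polynomial coefficients in $i,j,k$ coming from the L'Hopital limits, and the paper disposes of them with \eqref{equ:G2Der}, \eqref{equ:G2DerDer}, \eqref{equ:HoffSumFormula2}, \eqref{equ:Hoffl2case} and \eqref{equ:EulerSumFormula}, not with Corollary~\ref{cor:1st=3rdabc}.
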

\begin{proof}
Let $a=1=b$ and $c\to 1$ and $d\to 1$ in Theorem~\ref{thm:depth4}. Then we get
\begin{multline*}
2\sum_{\substack{i\geq 2\\ i+j+k+l=n}}(2^{i+j+k-2}- 2^{j+k-1}- 2^k+2^{i+j-2}-2^{j-1}) \zeta(i, j, k, l) = 6\sum_{\substack{i\geq 2\\i+j+k+l=n}}\zeta(i, j, k, l)\\ - 4\sum_{\substack{i\geq 2\\i+j+l=n-1}}\zeta(i, 1, j, l)+2\sum_{\substack{i\geq 2\\i+j+k=n}}(n-6)\zeta(i, j, k) + 2\sum_{j+k=n-2}\zeta(2, j, k)\\
-2\sum_{\substack{i\geq 2\\i+k=n-1}}(k-2)\zeta(i, 1, k) + \sum_{\substack{i\geq 2\\i+j=n}}(i-3)(j-1)\zeta(i, j) + (n-3)\zeta(2, n-2).
\end{multline*}
By the sum formula and the weighted sum formula of Guo and Xie \cite[Theorem 1.1]{GuoXi2009} (setting $k=4$ there), we get
\begin{multline}\label{equ:cormiddlestep}
2\sum_{\substack{i\geq 2\\i+j+l=n-1}}\zeta(i, 1, j, l)- \sum_{\substack{i\geq 2\\i+j+k+l=n}}(2^{i-1}+2^k)\zeta(i, j, k, l)= \sum_{j+k=n-2}\zeta(2, j, k)-3\zeta(n)\\
-\sum_{i=2}^{n-2}(n-i-3)\zeta(i, 1, n-i-1) + \frac{1}{2}\sum_{i=2}^{n-1}(i-3)(n-i-1)\zeta(i, n-i) + \frac{n-3}{2}\zeta(2, n-2)
\end{multline}
Taking $l=2, i_1=2, i_2=n-3$ in \cite[Thm.\ 5.1]{Hoffman1992} we get
\begin{equation}\label{equ:Hoffl2case}
\sum_{j+k=n-2}\zeta(2, j, k)=\zeta(3,n-3)+\zeta(2,n-2).
\end{equation}
Combining this with \eqref{equ:G2Der}, \eqref{equ:G2DerDer}, \eqref{equ:HoffSumFormula2} and the sum formula
\eqref{equ:EulerSumFormula} we see easily that \eqref{equ:cormiddlestep} can be simplified to \eqref{equ:corFinal}.
This finishes the proof of the corollary.
\end{proof}

\bigskip
Haiping Yuan

Department of Mathematics, York College of Pennsylvania, York, PA 17403

hyuan@ycp.edu

\medskip
Jianqiang Zhao

Kavli Institute for Theoretical Physics China, Beijing, China and

Department of Mathematics, Eckerd College, St. Petersburg, FL 33711

zhaoj@eckerd.edu

\end{document}